\newcommand{\KK}{{\mathbb{K}}}
\DeclareFontFamily{U}{rsfs}{\skewchar\font127 }
\DeclareFontShape{U}{rsfs}{m}{n}{%
   <5> <6> rsfs5
   <7> rsfs7
   <8> <9> <10> <10.95> <12> <14.4> <17.28> <20.74> <24.88> rsfs10
}{}
\DeclareSymbolFont{rsfs}{U}{rsfs}{m}{n} 
\DeclareSymbolFontAlphabet{\scr}{rsfs}
\DeclareMathOperator{\Aut}{Aut}
\DeclareMathOperator{\Span}{span}
\renewcommand{\emph}{\textbf} 										
\newcommand{\hlink}[2]{\href{#1}{\texttt{#2}}} 
\newcommand{\xqedhere}[2]{%
  \rlap{\hbox to#1{\hfil\llap{\ensuremath{#2}}}}}
\newcommand{\xqed}[1]{%
  \leavevmode\unskip\penalty9999 \hbox{}\nobreak\hfill
  \quad\hbox{\ensuremath{#1}}}
\theoremstyle{plain}
\newtheorem{theorem}{Theorem}[section]							
\newtheorem{corollary}[theorem]{Corollary}
\newtheorem{lemma}[theorem]{Lemma}
\newtheorem{definition}[theorem]{Definition}
\theoremstyle{definition} 
\newtheorem{remark}[theorem]{Remark}
\newtheorem{example}[theorem]{Example}  
\numberwithin{equation}{section}  		
\title{\textbf{Spectral Theory on Commutative 
Kre\u\i n C*-algebras}
}
\author{\normalsize 
Pichkitti Bannangkoon$^a$\footnote{Current Address: Department of Mathematics, Penn State University, University Park, State College, PA 16802,  USA.} , 
Paolo Bertozzini$^b$, 
Wicharn Lewkeeratiyutkul$^a$  
\\ 
\normalsize $^a$\textit{Department of Mathematics and Computer Science, Faculty of Science,}
\\
\normalsize \textit{Chulalongkorn University, Bangkok 10330, Thailand}
\\ 
\normalsize e-mails: \texttt{pb\_math@yahoo.com}  \quad \texttt{Wicharn.L@chula.ac.th}
\\
\normalsize $^b$\textit{Department of Mathematics and Statistics, Faculty of Science and Technology,}
\\
\normalsize \textit{Thammasat University, Bangkok 12121, Thailand} 
\\ 
\normalsize e-mail: \texttt{paolo.th@gmail.com}
}
\date{\normalsize{08 December 2013}\footnote{This is a reformatted version for arXiv of an original paper presented at the 
``$14^\text{th}$ Annual Thai Mathematical Meeting'' in Prince of Songkla University Pattani Campus on 5-6 March 2009 and appeared in: 
\textit{$14^\text{th}$ Annual Meeting in Mathematics 2009, Collection of Full Papers} 38-55.}.
}
\begin{document}

\maketitle

\begin{abstract} \noindent 
A Banach involutive algebra $(A,*)$ is called a Kre\u\i n C*-algebra if there is a fundamental symmetry of $(A,*)$, i.e., $\alpha\in \Aut(A,*)$  such that $\alpha^2=1_A$  and $\|\alpha(x^*)x\|=\|x\|^2$  for all $x\in A$.
	Using $\alpha$, we can decompose $A=A_+\oplus A_-$  where $A_+=\{x\in A\mid\alpha(x)=x\}$  and $A_-=\{x\in A\mid\alpha(x)=-x\}$. The even part $A_+$  is a C*-algebra and the odd part $A_-$ is a Hilbert C*-bimodule over the even part $A_+$. The ultimate goal is to develop a spectral theory for commutative unital Kre\u\i n C*-algebras when the odd part is a symmetric imprimitivity bimodule over the even part and there exists a suitable ``exchange symmetry'' $\epsilon$ between $A_+$ and $A_-$. 

\smallskip

\noindent
\emph{Keywords:} Spectral Theory, Kre\u\i n C*-algebras. 

\smallskip

\noindent
\emph{MSC-2010:} 						
					47B50, 
					46C20. 
\end{abstract}




\section{Introduction}

The study of vector spaces equipped with ``inner products'' that are non-necessarily positive-definite has always been a theme of extreme importance in relativistic physics starting probably with the work of H.Minkowski~\cite{Ra}. Complex vector spaces with indefinite sesquiliner forms have been introduced in relativistic quantum field theory by P.Dirac~\cite{Di}, W.Pauli~\cite{Pau} and then used by S.Gupta~\cite{Gu} K.Bleuler~\cite{Bl}, although their mathematical definition has been given later by L.Pontrjagin~\cite{Po}. Complete ``indefinite'' inner product spaces, called Kre\u\i n spaces, have been introduced by Ju.Ginzburg~\cite{Gi} and E.Scheibe~\cite{Sc} and the study of their properties has been undertaken by several Russian mathematicians. 

Although algebras of continuous operators on Kre\u\i n spaces have been around for some time, a first definition of abstract Kre\u\i n C*-algebra has been provided only recently by K.Kawamura~\cite{Ka, Ka2}. Kre\u\i n C*-algebras are somehow expected to play some role in a ``semi-Riemannian'' version of A.Connes non-commutative geometry~\cite{Co} (see A.Strohmaier~\cite{St}, M.Pasche-A.Rennie-R.Verch~\cite{Pas} for details) and for this reason it should be of some interest to develope a spectral theory that is suitable for them. 

It is the purpose of this paper to introduce a simple spectral theory for the special class of Kre\u\i n C*-algebras that decompose, via a fundamental symmetry, in direct sum $A=A_+\oplus A_-$ with $A_+$ commutative C*-algebra and $A_-$ symmetric imprimitivity (anti-)Hilbert C*-bimodule over 
$A_+$ and that are equipped with an odd symmetry exchanging $A_+$ and $A_-$.  
Our main result (Theorem~\ref{thm2}) is that every such Kre\u\i n C*-algebra turns out to be isomorphic (via Gel'fand tranform) to an algebra of continous functions with values in a very elementary Kre\u\i n C*-algebra $\KK$ defined in detail in Theorem~\ref{tm2}. 

The main result presented here can actually be also obtained in at least a few other ways that we briefly describe here below:
\begin{itemize}
\item
For a symmetric imprimitivity commutative unital Kre\u\i n C*-algebra $A$, the even part $A_+$, as a commutative unital C*-algebra, is isomorphic to the algebra of sections of a trivial complex line bundle over the Gel'fand spectrum $\Omega(A_+)$. 
The odd part $A_-$ is a symmetric imprimitivity Hilbert C*-bimodule over $A_+$ and, making use of the spectral theorem for imprimitivity Hilbert 
C*-bimodules developed in~\cite{BCL}, it is isomorphic to the bimodule of sections of a complex line bundle over the same $\Omega(A_+)$. Under the existence of an odd symmetry on $A$, the Witney sum of the previous two line bundles turns out to be a bundle of rank-one Krein C*-algebras isomorphic to $\mathbb K$ over $\Omega(A_+)$. 
\item
Although we are not aware now of a specific reference to a Gel'fand theorem, once a specific fundamental symmetry/odd symmetry 
$\alpha,\epsilon$ has been choosen on $A$, the Kre\u\i n C*-algebra becomes completely equivalent to a $\mathbb Z_2$-graded C*-algebra with commutative even part and for such elementary C*-algebras spectral results are for sure obtainable as special cases from the general theory of 
C*-dynamical systems. 
\item
To every unital Kre\u\i n C*-algebra equipped with a given fundamental symmetry, we can always associate a C*-category with two objects. In the case of imprimitivity commutative unital Kre\u\i n C*-algebras, such C*-category will be commutative and full according to the definition provided 
in~\cite{Be} and our spectral theorem can be recovered as a trivial application of the general spectral theory for commutative full C*-categories developed in~\cite{Be}. 
\end{itemize}
The techniques utilized here in the proof of this result are essentially an adaptation of those developed in~\cite{Be} for a spectral theory of commutative full C*-categories. 
Our choice to develope a completely independent proof of the result can be justified from the desire to test and caliber some of the general techniques introduced in~\cite{Be} in a simple situation that in the near future might be used as a ``laboratory'' for 
non-commutative extensions of the spectral theorem. It is expected that more powerful spectral theories for wider classes of 
Kre\u\i n C*-algebras might be developed using a Kre\u\i n version of the spaceoid Fell bundle introduced in~\cite{Be}, we hope to address this more general problem in other preprints. 

\section{Preliminaries}

For the convenience and usefulness of the reader, we provide here some background material and beneficial theorems on the theory of C*-algebras and Hilbert C*-modules (see~\cite{Mu}~\cite{BCL} for all the details). 

\begin{definition}
An \emph{algebra} over the complex numbers is a complex vector space $A$ equipped with a binary operation (called product) 
$\cdot:A\times A\rightarrow A,\quad \cdot: (x,y)\mapsto x\cdot y$, 
that is bilinear.

The algebra is called \emph{associative} if:
$x\cdot(y\cdot z)=(x\cdot y)\cdot z$.

The algebra is called \emph{unital} if:
$\exists 1_A\in A$, such that $\forall x\in A, \ x\cdot 1_A=1_A\cdot x=x$. 
\end{definition}

\begin{definition}
An algebra is called \emph{involutive} or also \emph{$*$-algebra} if it is equipped with a function $*:A\rightarrow A$, such that:
\begin{align*}
  &(x^*)^*=x, \quad \forall x\in A,\\
  &(x\cdot y)^*=y^*\cdot x^*, \quad \forall x,y\in A,\\
  &(\alpha x+\beta y)^*=\bar{\alpha}x^*+\bar{\beta}y^*, \quad \forall\alpha, \beta\in \mathbb{C}, \ \forall x,y \in A.
\end{align*}
\end{definition}

\begin{definition}
A \emph{normed algebra} is an algebra $A$ that is also a normed space and that satisfies the property:
$\|x\cdot y\|\leq \|x\|\cdot\|y\|$, $\forall x,y \in A$. 
A \emph{Banach algebra} is a normed algebra that, as a normed space, is complete.
\end{definition}

\begin{definition}
A \emph{pre-C*-algebra} is and involutive algebra $A$ that is also a normed algebra that satisfies the property 
$\|x^*x\|=\|x\|^2$, $\forall x\in A$. 
A \emph{C*-algebra} is a pre-C*-algebra that is also Banach algebra. 
\end{definition}

\begin{example} 
The followings are examples of C*-algebras:
\begin{enumerate}
	\item Let $X$ be a compact Hausdorff space. Then the space $C(X)$ of all continuous complex-valued functions on $X$ is a unital commutative 
C*-algebra with the following operations and norm: $\forall x\in X$, 
\begin{align*}
&(f+g)(x)=f(x)+g(x), 
& 
&(cf)(x)=cf(x), 
\\
&(fg)(x)=f(x)g(x), 
&
&(f^*)(x)=\overline{f(x)}, 
\\
&\|f\|=\sup\{|f(x)| : x\in X\}.
\end{align*}	 
	\item The space $B(H)$ of bounded linear maps on a Hilbert space $H$ is a unital C*-algebra that is non-commutative if 
	$\dim(H)>1$, with the following operations and norm: 
\begin{align*}	
&(T+S)(x)=T(x)+S(x), 
&
&(cT)(x)=cT(x), 
\\ 
&(T\circ S)(x)=T(S(x)),
& 
&\|T\|=\sup_{x\neq 0}\,\frac{\|T(x)\|}{\|x\|}, 
\end{align*}
the involution $T^*$ of $T$ is the adjoint of $T$:	\  						 
$\langle Tx\mid y \rangle=\langle x\mid T^*y\rangle, \ \forall x,y\in H$. 
\xqedhere{2.4cm}{\lrcorner}
\end{enumerate}
\end{example}

\begin{definition}
A \emph{homomorphism} from an algebra $A$ to an algebra $B$ is a linear map $\varphi:A\rightarrow B$ such that $$\varphi(ab)=\varphi(a)\varphi(b)\,\quad \forall a, b\in A.$$
A \emph{$*$-homomorphism} $\varphi:A\rightarrow B$ of $*$-algebras $A$ and $B$ is a homomorphism of algebras such that 
$\varphi(a^*)=\varphi(a)^* \quad \forall a\in A.$
If in addition $\varphi$ is a bijection, it is a \emph{$*$-isomorphism}. 
\end{definition}

\begin{definition}
A \textbf{character} on a commutative algebra $A$ is a nonzero homomorphism $\tau:A\rightarrow \mathbb{C}$.  
We denote by $\Omega(A)$ the set of characters on $A$.
\end{definition}

For each $a\in A$, define $\hat{a}:\Omega(A)\rightarrow \mathbb{C}$ by $\hat{a}(\tau)=\tau(a)$ for all $\tau\in \Omega(A)$.
Equip $\Omega(A)$ with the smallest topology on $\Omega(A)$ which makes each $\hat{a}$ continuous.

\begin{theorem}
If $A$ is a unital commutative Banach algebra, then $\Omega(A)$ is a compact Hausdorff space with respect to the topology defined above.
\end{theorem}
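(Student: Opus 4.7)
The plan is to realize $\Omega(A)$ as a weak-$*$ closed subset of the closed unit ball of the continuous dual $A^*$, and then invoke the Banach--Alaoglu theorem.

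First I would verify that every character is automatically continuous of norm at most $1$ (indeed of norm exactly $1$, since $\tau(1_A)=1$ because $\tau$ is a nonzero homomorphism and $1_A$ is idempotent). The standard argument: if some $\tau(a)$ satisfied $|\tau(a)|>\|a\|$, then $1_A-(\tau(a))^{-1}a$ would be invertible in $A$ yet lie in the kernel of $\tau$, contradicting $\tau(1_A)=1$. Consequently $\Omega(A)\subseteq \overline{B}_{A^*}(0,1)$, the closed unit ball of $A^*$.

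Second I would check that the topology defined in the statement (the initial topology generated by the family $\{\hat a\}_{a\in A}$) coincides with the subspace topology induced from the weak-$*$ topology on $A^*$. This is immediate from the definitions: the weak-$*$ topology on $A^*$ is itself the initial topology generated by the evaluation maps $\tau\mapsto \tau(a)$, which restrict to the $\hat a$'s on $\Omega(A)$. By Banach--Alaoglu, $\overline{B}_{A^*}(0,1)$ is weak-$*$ compact, so it suffices to show $\Omega(A)$ is weak-$*$ closed in this ball.

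Third I would prove closedness as follows. The conditions defining $\Omega(A)$ inside the unit ball are: (i) $\tau(1_A)=1$; (ii) $\tau(ab)=\tau(a)\tau(b)$ for every $a,b\in A$; (iii) $\tau$ is linear (automatic as a point of $A^*$). Condition (i) cuts out the weak-$*$ closed set $\{\tau:\hat{1_A}(\tau)=1\}$. For each fixed pair $a,b\in A$, condition (ii) cuts out $\{\tau:\widehat{ab}(\tau)-\hat a(\tau)\hat b(\tau)=0\}$, and this set is weak-$*$ closed because $\tau\mapsto \hat a(\tau)\hat b(\tau)$ is weak-$*$ continuous as a product of two weak-$*$ continuous scalar functions. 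Intersecting these closed sets over all $a,b\in A$ gives $\Omega(A)$ (note that $\tau(1_A)=1$ automatically forces $\tau\neq 0$, so no separate non-vanishing condition is needed). Hence $\Omega(A)$ is compact.

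Finally I would establish the Hausdorff property. Given $\tau_1\neq\tau_2$ in $\Omega(A)$, pick $a\in A$ with $\tau_1(a)\neq\tau_2(a)$, choose disjoint open neighbourhoods $U_1,U_2\subseteq\mathbb{C}$ of $\tau_1(a),\tau_2(a)$, and take $\hat a^{-1}(U_1)$, $\hat a^{-1}(U_2)$, which are open by the very definition of the topology. The only mild obstacle, and the conceptual heart of the argument, is the weak-$*$ closedness: one must notice that although multiplicativity is a condition indexed by the whole algebra $A$, each individual condition is weak-$*$ closed and the weak-$*$ topology is stable under arbitrary intersections of closed sets, so no completeness or separability hypothesis on $A$ is required beyond what makes Banach--Alaoglu available.
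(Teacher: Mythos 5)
Your proof is correct and is precisely the standard Gel'fand--Banach--Alaoglu argument; the paper states this theorem without proof as background material (citing Murphy), and when it proves the Kre\u\i n analogue in Lemma~\ref{lem: comp} it invokes exactly the same technique you use, namely realizing the character space as a weak-$*$ closed subset of a compact product/ball and separating points with the continuous functions $\hat a$. No gaps: the norm bound via invertibility of $1_A-\tau(a)^{-1}a$, the identification of the initial topology with the weak-$*$ subspace topology, the closedness of the multiplicativity and unitality conditions, and the Hausdorff separation are all handled correctly.
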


\begin{theorem}[Gel'fand-Mazur]
If $A$ is a unital Banach algebra in which every non-zero element is invertible, then $A=\mathbb{C}1$.
\end{theorem}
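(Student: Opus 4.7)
The plan is to show that every element of $A$ is a scalar multiple of the unit, by invoking the standard spectral theory of elements in a unital Banach algebra, and in particular the non-emptiness of the spectrum. For an arbitrary $a\in A$, I would introduce the resolvent set $\rho(a)=\{\lambda\in\CC\st a-\lambda 1\text{ is invertible in }A\}$ and the spectrum $\sigma(a)=\CC\setminus\rho(a)$, and I would aim to locate a scalar $\lambda\in\sigma(a)$; the hypothesis that every nonzero element of $A$ is invertible then forces $a-\lambda 1=0$, i.e., $a=\lambda 1\in\CC 1$, which finishes the proof.

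Reducing things this way, the whole content is concentrated in showing $\sigma(a)\neq\emptyset$. The standard approach I would follow is the complex-analytic one: first show that $\rho(a)$ is open in $\CC$ by a Neumann-series argument (namely, if $\lambda_0\in\rho(a)$ and $|\lambda-\lambda_0|$ is small enough relative to $\|(a-\lambda_0 1)^{-1}\|$, then $a-\lambda 1=(a-\lambda_0 1)(1-(\lambda-\lambda_0)(a-\lambda_0 1)^{-1})$ is invertible by summing the geometric series, which converges because $A$ is a Banach algebra). The same Neumann-series estimate yields that the resolvent function $R\colon\rho(a)\to A$, $R(\lambda)=(a-\lambda 1)^{-1}$, is holomorphic, and a direct norm estimate gives $\|R(\lambda)\|\to 0$ as $|\lambda|\to\infty$ (e.g., for $|\lambda|>\|a\|$ one can write $R(\lambda)=-\lambda^{-1}\sum_{n\geq 0}(a/\lambda)^n$).

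Now suppose for contradiction that $\sigma(a)=\emptyset$, i.e., $\rho(a)=\CC$. Then $R$ is an entire $A$-valued function that is bounded and vanishes at infinity. To conclude $R\equiv 0$ (which contradicts $R(\lambda)(a-\lambda 1)=1\neq 0$), I would compose $R$ with an arbitrary continuous linear functional $\varphi\in A^*$ to get an entire scalar function $\varphi\circ R$ that is bounded on $\CC$ and tends to $0$ at infinity; Liouville's theorem gives $\varphi\circ R\equiv 0$, and since this holds for every $\varphi$, the Hahn--Banach theorem forces $R\equiv 0$, the desired contradiction. Hence $\sigma(a)\neq\emptyset$, and the argument of the first paragraph completes the proof.

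The main obstacle in this plan is the non-emptiness of the spectrum, which is not a purely algebraic statement and genuinely requires the interplay of the Banach algebra structure (for the Neumann series and the decay of $R$) with complex-analytic machinery (Liouville) plus Hahn--Banach to pass from scalar-valued to $A$-valued holomorphic functions; everything else is a one-line consequence of the invertibility hypothesis.
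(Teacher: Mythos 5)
Your argument is the standard and correct proof of Gel'fand--Mazur: reduce to non-emptiness of the spectrum, establish it via the Neumann-series/holomorphy of the resolvent, its decay at infinity, Liouville's theorem applied to $\varphi\circ R$ for $\varphi\in A^*$, and Hahn--Banach. The paper itself states this theorem without proof as background material (citing Murphy's textbook), and your proposal is precisely the classical argument that source uses, so there is nothing to reconcile.
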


\begin{theorem}[Spectral theorem]
If $A$ is a non-zero unital commutative C*-algebra, then the Gelfand transform
$$\varphi:A\rightarrow C(\Omega(A)),\ a\mapsto \hat{a},\quad (\hat{a}:\Omega(A)\rightarrow \mathbb{C}, \ \tau\mapsto\tau(a))$$
is an isometric $*$-isomorphism.
\end{theorem}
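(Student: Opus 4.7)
The plan is to verify in turn that the Gel'fand transform $\varphi$ is a well-defined $*$-homomorphism, an isometry, and surjective; injectivity then comes for free from isometry.

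First, I would check the algebraic structure. Multiplicativity of $\varphi$ and linearity are immediate from the definition $\hat a(\tau)=\tau(a)$ and the fact that each $\tau\in\Omega(A)$ is a nonzero homomorphism into $\CC$. The unit maps to the constant function $1$ because any character on a unital algebra sends $1_A$ to $1$. Continuity of $\hat a$ holds by construction of the topology on $\Omega(A)$, and compactness/Hausdorffness of $\Omega(A)$ (so that $C(\Omega(A))$ is a bona fide C*-algebra with the sup norm) is granted by the theorem stated just before Gel'fand--Mazur.

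The crucial algebraic step is to show that $\varphi$ is $*$-preserving, i.e.\ $\tau(a^*)=\overline{\tau(a)}$. Writing $a=b+ic$ with $b,c$ self-adjoint (so $b=(a+a^*)/2$, $c=(a-a^*)/(2i)$), it suffices to prove $\tau(b)\in\RR$ whenever $b=b^*$. For this I would use the standard trick: set $\tau(b)=\alpha+i\beta$ and, for $t\in\RR$, estimate
\begin{equation*}
\alpha^2+(\beta+t)^2=|\tau(b+it1_A)|^2\le\|b+it1_A\|^2=\|(b+it1_A)^*(b+it1_A)\|=\|b^2+t^21_A\|\le\|b\|^2+t^2,
\end{equation*}
where in the first inequality I use that every character on a unital Banach algebra is automatically contractive (via Gel'fand--Mazur applied to $A/\ke\tau\simeq\CC$), and in the central equality I use the C*-identity. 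Letting $t\to\pm\infty$ forces $\beta=0$. With $*$-preservation in hand, the image of $\varphi$ is a unital $*$-subalgebra of $C(\Omega(A))$ that separates the points of $\Omega(A)$ tautologically (different characters differ on some $a$, hence on $\hat a$).

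Next I would prove isometry. For $a\in A$, the identification of the spectrum $\sigma(a)$ with $\{\tau(a):\tau\in\Omega(A)\}$ (valid in unital commutative Banach algebras by Gel'fand--Mazur applied to quotients by maximal ideals) gives $\|\hat a\|_\infty=r(a)$, the spectral radius. For self-adjoint $b$, iterating the C*-identity along powers of two yields $\|b^{2^n}\|=\|b\|^{2^n}$, hence $r(b)=\|b\|$; applying this to $b=a^*a$ and using $\|a\|^2=\|a^*a\|=r(a^*a)=\|\widehat{a^*a}\|_\infty=\|\hat a\|_\infty^2$ gives $\|\hat a\|_\infty=\|a\|$ for arbitrary $a$. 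Finally, surjectivity follows from the complex Stone--Weierstrass theorem: $\varphi(A)$ is a point-separating, conjugation-closed unital subalgebra of $C(\Omega(A))$ that is closed (being the isometric image of a Banach space), hence equals all of $C(\Omega(A))$.

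The main obstacle is the $*$-preservation step — showing that characters are real on self-adjoint elements — because it is the only place where the C*-identity $\|x^*x\|=\|x\|^2$ enters in a genuinely nontrivial way; everything else is either formal manipulation or a direct invocation of Gel'fand--Mazur and Stone--Weierstrass. The spectral-radius identity $r(b)=\|b\|$ for self-adjoint $b$ is a close second, but it is a short power-of-two computation once the C*-identity is on the table.
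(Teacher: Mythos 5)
Your argument is correct and complete: it is the standard textbook proof of the commutative Gel'fand--Naimark theorem (character contractivity via Gel'fand--Mazur, reality of characters on self-adjoint elements via the $b+it1_A$ trick, isometry via the spectral radius formula applied to $a^*a$, surjectivity via Stone--Weierstrass). Note, however, that the paper offers no proof of this statement at all --- it is recalled in the Preliminaries as background material (with a reference to Murphy's book) and is used later only as an ingredient in the Kre\u\i n C*-algebra spectral theorem --- so there is no in-paper argument to compare against; your write-up simply supplies the classical proof the authors take for granted.
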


\begin{definition}
A \emph{right $R$-module $E$} over a ring $R$ is an Abelian group $(E,+)$ equipped with an operation $\cdot:E\times R\rightarrow E$, $\cdot:(x,a)\mapsto x\cdot a$, of right multiplication by elements of the ring $R$, that satisfies the following properties:
\begin{align*}
  &x\cdot(a+b)=(x\cdot a)+(x\cdot b), \quad \forall x\in E, \forall a, b\in R,\\
  &(x+y)\cdot a=(x\cdot a)+(y\cdot a), \quad \forall x,y \in E, \forall a\in R,\\
  &x\cdot (ab)=(x\cdot a)\cdot b, \quad \forall x\in E, \forall a,b \in R.
\end{align*}
If the ring $R$ is unital, we will say that $E$ is a \emph{unital right $R$-module} if the additional property here below is satisfied:
$$x\cdot 1_R=x,\quad \forall x\in E.$$
Analogously, we can define a \emph{(unital) left $R$-module $E$} over a ring $R$. 
\end{definition}
Note that a unital module over a unital complex algebra is naturally a complex vector space. 

\begin{definition}
A \emph{right pre-Hilbert C*-module }$M_B$ over a unital C*-algebra $B$ is a unital right module over the unital ring $B$ that is equipped with a 
$B$-valued inner product $(x,y)\mapsto \langle x|y \rangle$ such that
\begin{align*}
  &\langle z\mid x+y \rangle_B=\langle z\mid x\rangle_B+\langle z\mid y \rangle_B, \quad\forall x, y, z\in M,\\
  &\langle z\mid x\cdot b\rangle_B=\langle z\mid x\rangle_Bb, \quad\forall b\in B,\\
  &\langle y\mid x\rangle_B=\langle x\mid y\rangle_B^*, \quad\forall x, y\in M,\\
  &\langle x\mid x\rangle_B\in B_+, \quad\forall x\in M,\\
  &\langle x\mid x\rangle_B=0_B\Rightarrow x=0_M.
\end{align*}
Analogously, a \emph{left pre-Hilbert C*-module} $_AM$ over a unital C*-algebra A is a unital left module $M$ over the unital ring $A$, that is equipped with an $A$-valued inner product $M\times M \rightarrow A$ denoted by $(x,y)\mapsto _A\langle x\mid y\rangle$. Here the $A$-linearity in on the first variable.
\end{definition}

\begin{remark}
A right (respectively left) pre-Hilbert C*-module $M_B$ over the C*-algebra $B$ is naturally equipped with a norm
$$
\|x\|_M:=\sqrt{\|\langle x\mid x\rangle_B\|_B}, \quad\forall x\in M.
\xqedhere{4.55cm}{\lrcorner}
$$
\end{remark}

\begin{definition}
A right (resp. left) \emph{Hilbert C*-module} is a right (resp. left) pre-Hilbert C*-module over a C*-algebra $B$ that is a Banach space with respect to the previous norm $\|\cdot\|_M$ (resp. $_M\|\cdot\|$).
\end{definition}

\begin{definition}
A right Hilbert C*-module $M_B$ is said to be \emph{full} if
$$
\langle M_B\mid M_B\rangle_B:= \overline{\Span\{\langle x\mid y\rangle_B \mid x,y\in M_B\}}=B,
$$
where the closure is in the norm topology of the C*-algebra $B$. A similar definition holds for a left Hilbert C*-module.
\end{definition}

We recall here the following well-known result, see for example in~\cite[Page 65]{FGV}:
\begin{lemma}
Let $M_B$ be a right Hilbert C*-module over a unital C*-algebra $B$. Then $M_B$ is full if and only if 
$\Span\{\langle x\mid y \rangle_B\mid x,y\in M_B\}=B$.
\end{lemma}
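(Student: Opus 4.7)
The ``if'' direction is trivial: if $\Span\{\langle x\mid y\rangle_B : x,y\in M_B\}=B$, then its closure in $B$ is certainly $B$. So the substance of the lemma is the ``only if'' direction, and the plan is to show that once the span is dense in a \emph{unital} $B$, the unitality forces the ideal to be everything.

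First I would set $I:=\Span\{\langle x\mid y\rangle_B : x,y\in M_B\}$ and verify that $I$ is a self-adjoint two-sided ideal of $B$. Self-adjointness follows from $\langle x\mid y\rangle_B^{*}=\langle y\mid x\rangle_B$. The right-ideal property is immediate from the right $B$-linearity of the inner product: $\langle x\mid y\rangle_B\,b=\langle x\mid y\cdot b\rangle_B\in I$. Combining these two facts gives the left-ideal property as well, since for any $b\in B$,
\[
b\cdot\langle x\mid y\rangle_B=\bigl(\langle y\mid x\rangle_B\cdot b^{*}\bigr)^{*}=\langle y\mid x\cdot b^{*}\rangle_B^{*}=\langle x\cdot b^{*}\mid y\rangle_B\in I.
\]

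Now I would use the hypothesis that $M_B$ is full, i.e.\ $\overline{I}=B$. Since $B$ is unital, $1_B\in\overline{I}$, so there exists $a\in I$ with $\|1_B-a\|<1$. Standard Banach-algebra theory (Neumann series: $a^{-1}=\sum_{n\ge 0}(1_B-a)^n$ converges in norm) then shows that $a$ is invertible in $B$. Because $I$ is a left ideal, $1_B=a^{-1}\cdot a\in I$, and then $b=b\cdot 1_B\in I$ for every $b\in B$, giving $I=B$.

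There is no serious obstacle here: the only nontrivial ingredient is the observation that a dense two-sided ideal in a \emph{unital} C*-algebra must already contain the identity, via the openness of the invertible elements around $1_B$. The role of unitality of $B$ is essential; without it the analogous statement (e.g.\ for the Pedersen ideal of a non-unital C*-algebra) would fail, so the proof must at some point use $1_B$, and it does so precisely through the invertibility step.
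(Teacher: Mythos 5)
Your proof is correct and complete. Note that the paper itself does not prove this lemma --- it is recalled as a known fact with a citation to \cite[Page 65]{FGV} --- but the argument you give (the span of inner products is a self-adjoint two-sided ideal, and a dense one-sided ideal in a unital Banach algebra must contain an invertible element, hence the identity) is precisely the standard one behind that reference, and every step checks out, including the left-ideal verification via $b\cdot\langle x\mid y\rangle_B=\langle x\cdot b^{*}\mid y\rangle_B$.
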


We also recall a few definitions and results on bimodules (see for example~\cite{BCL}). 
\begin{theorem} (\cite[Proposition 2.6]{BCL})
Let $M_A$ be a right unital Hilbert C*-module over a unital \hbox{C*-algebra} $A$ and $J\subset A$ an involutive ideal in $A$.
 
The set $MJ:=\{\sum_{j=1}^N x_ja_j\mid x_j\in M, a_j\in J, N\in \mathbb{N}_0\}$ is a submodule of $M$. 
The quotient module $M/(MJ)$ has a natural structure as a right Hilbert C*-module over the quotient C*-algebra $A/J$. If $M$ is full over $A$, 
also $M/(MJ)$ is full over $A/J$. 

A similar statement holds for a left Hilbert C*-module. 
\end{theorem}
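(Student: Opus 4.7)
The plan is to check, in order, four things: (i) $MJ$ is an $A$-submodule of $M$, and in fact norm-closed; (ii) the algebraic quotient $M/(MJ)$ inherits a well-defined right $A/J$-module structure and an $A/J$-valued inner product; (iii) this inner product satisfies all Hilbert C*-module axioms, the only non-trivial one being positive-definiteness; and (iv) the quotient is a Banach space in the resulting norm, and fullness transfers.

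For (i), closure of $MJ$ under the right $A$-action is immediate since $J$ is an ideal: $(\sum_j x_ja_j)\cdot b=\sum_j x_j(a_jb)\in MJ$. Because $J$ is a closed ideal in the C*-algebra $A$, it carries a bounded positive self-adjoint approximate unit, and the Cohen--Hewitt factorization theorem yields $\overline{MJ}=MJ$. For (ii), well-definedness of $[x]\cdot[a]:=[xa]$ reduces to checking that $(x-x')a + x'(a-a')\in MJ$ whenever $x-x'\in MJ$ and $a-a'\in J$; both summands clearly belong to $MJ$. Well-definedness of $\langle[x]\mid[y]\rangle_{A/J}:=[\langle x\mid y\rangle_A]$ follows from the observation that if $y=\sum x_ja_j\in MJ$ then $\langle x\mid y\rangle_A=\sum\langle x\mid x_j\rangle_A\,a_j\in J$, and $J$ is norm-closed. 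The axioms of $A/J$-sesquilinearity, conjugate symmetry, and positivity of $\langle\cdot\mid\cdot\rangle_{A/J}$ are inherited from those on $M$ through the $*$-homomorphism $A\to A/J$, which preserves positive elements.

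The central step is (iii), the non-degeneracy axiom: $\langle x\mid x\rangle_A\in J$ must imply $x\in MJ$. This is where the approximate unit $(e_\lambda)$ of $J$ enters. A direct expansion using $e_\lambda=e_\lambda^*$ gives
\[
\|x-xe_\lambda\|_M^2 \;=\; \bigl\|\,p - pe_\lambda - e_\lambda p + e_\lambda p e_\lambda\,\bigr\|_A, \qquad p:=\langle x\mid x\rangle_A.
\]
Assuming $p\in J$, each of the three non-constant terms on the right-hand side converges in norm to $p$, so the expression tends to $\|p-p-p+p\|_A=0$. Since $xe_\lambda\in MJ$ and $MJ$ is closed, one concludes $x\in MJ$. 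The same identity, now without the assumption $p\in J$, identifies the quotient norm on $M/MJ$ with $\sqrt{\|\langle\cdot\mid\cdot\rangle_{A/J}\|}$ (via the standard formula $\|[a]\|_{A/J}=\lim_\lambda\|a-ae_\lambda\|_A$ for the quotient C*-norm), so (iv) $M/MJ$ is automatically complete and hence a Hilbert C*-module over $A/J$. Finally, fullness transfers: by the preceding lemma, $M$ full means $\Span\{\langle x\mid y\rangle_A\mid x,y\in M\}=A$; applying the surjective quotient map $A\to A/J$ gives $\Span\{\langle[x]\mid[y]\rangle_{A/J}\mid x,y\in M\}=A/J$. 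The main obstacle is the non-degeneracy step (iii), which the approximate-unit identity above settles directly; the rest consists of routine verifications.
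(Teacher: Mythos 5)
The paper does not actually prove this statement -- it is quoted verbatim from \cite[Proposition 2.6]{BCL} as background -- so there is no in-paper proof to compare against; your argument must stand on its own, and it does. The structure is the standard one and every step is sound: the Cohen--Hewitt factorization is a legitimate (and genuinely needed) way to see that the set of \emph{finite sums} $MJ$ is already norm-closed, and your approximate-unit identity $\|x-xe_\lambda\|_M^2=\|(1-e_\lambda)\langle x\mid x\rangle(1-e_\lambda)\|_A$ correctly settles non-degeneracy. The only place you go faster than the argument warrants is the identification of the quotient norm in step (iv): the formula $\|[a]\|_{A/J}=\lim_\lambda\|a-ae_\lambda\|_A$ applies to $a\in A$, whereas what you need is $\lim_\lambda\|(1-e_\lambda)p(1-e_\lambda)\|_A=\|p+J\|_{A/J}$ for $p=\langle x\mid x\rangle\geq 0$; this does follow, via $\|(1-e_\lambda)p(1-e_\lambda)\|=\|p^{1/2}(1-e_\lambda)\|^2\to\|p^{1/2}+J\|^2=\|p+J\|$, together with the inequality $\|x-xe_\lambda\|_M\leq\|x-y\|_M+\|y-ye_\lambda\|_M$ for $y\in MJ$ (using $0\leq e_\lambda\leq 1$) to show that $\lim_\lambda\|x-xe_\lambda\|_M$ equals the quotient norm of $[x]$. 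Spelling out that one line would make the proof complete; the fullness transfer via the unital-case lemma $\Span\{\langle x\mid y\rangle_A\}=A$ is exactly right.
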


Recall that a unital bimodule $_AM_B$ over two unital rings $A$ and $B$ is a left unital $A$-module and a right unital $B$-module such that 
$(a\cdot x)\cdot b=a\cdot (x\cdot b)$, for all $a\in A, b\in B$ and $x\in M$.

\begin{definition}
A \emph{pre-Hilbert C*-bimodule} $_AM_B$ over a pair of unital C*-algebras $A,B$ is a left pre-Hilbert C*-module over $A$ and a right 
pre-Hilbert C*-module over $B$ such that:
\begin{align*}
    &(a\cdot x)\cdot b=a\cdot(x\cdot b), \quad \forall a\in A, x\in M, b\in B,\\
    &\langle x\mid ay\rangle_B=\langle a^*x|y\rangle_B,  \quad\forall x,y \in M, \forall a \in A,\\
    &_A\langle xb\mid y\rangle= _A\langle x\mid yb^*\rangle,  \quad\forall x,y \in M, \forall b\in B.
\end{align*}
A \emph{Hilbert C*-bimodule} $_AM_B$ is a pre-Hilbert C*-bimodule over $A$ and $B$ that is simultaneously a left Hilbert C*-module over $A$ and a right Hilbert C*-module over $B$.
A full Hilbert C*-bimodule over the C*-algebras $A_B$ is said to be an \emph{imprimitivity bimodule} or an \emph{equivalence bimodule} if:
$$
{}_A\langle x\mid y\rangle\cdot z=x\cdot\langle y\mid z\rangle_B \quad\forall x, y, z\in M.
$$
\end{definition}

\begin{remark} (see for example~\cite[Remark 2.14]{BCL})
In an $A$-$B$ pre-Hilbert C*-bimodule there are two, usually different, norms, one as a left-C*-module over $A$ and one as a right-C*-bimodule over 
$B$:
$$
{}_M\|x\|:=\sqrt{\|_A\langle x\mid x\rangle\|_A},\quad \|x\|_M:=\sqrt{\|\langle x\mid x\rangle_B\|_B},\quad \forall x\in M,
$$
but this two norms coincide for an imprimitivity bimodule. 
In fact,
\begin{align*}
_M\|x\|^4&=\|_A\langle x\mid x\rangle\|^2_A=\|_A\langle x\mid x\rangle _A\langle x\mid x\rangle\|_A
=\|_A\langle x\langle x\mid x\rangle_B\mid x\rangle\|_A\\
&\leq \|\langle x\mid x\rangle_B\|_B\cdot _A\|\langle x\mid x\rangle\|_A=\|x\|^2_M\cdot _M\|x\|^2.
\xqedhere{6.15cm}{\lrcorner}
\end{align*}
\end{remark}

\begin{definition}
We say that a bimodule $_A M_A$ is a \emph{symmetric bimodule} if $ax=xa$, for all $x\in M$ and all $a\in A$. 
If $_AM_A$ is a Hilbert C*-bimodule, we say that it is a \emph{symmetric C*-bimodule} if it is symmetric as a bimodule and 
$_A\langle x\mid y\rangle=\langle y\mid x\rangle_A$, for all $x,y\in M$.
\end{definition}

\section{Main Results}

First of all, we review the definition of Kre\u\i n C*-algebra introduced by K.Kawamura~\cite{Ka2} and we explore some elementary properties of a 
Kre\u\i n C*-algebra.
\begin{definition} (K.Kawamura \cite[Definition 2]{Ka2})
A \textbf{Kre\u\i n C*-algebra} is a Banach $*$-algebra $A$ admitting at least one \textbf{fundamental symmetry} 
i.e. a $*$-automorphism $\phi:A\rightarrow A$ with $\phi\circ\phi=\iota_A$ such that 
$\|\phi(a^*)a\|=\|a\|^2$, for all $a\in A$.
\end{definition}
Note that, in general, a Kre\u\i n C*-algebra can admit several different fundamental symmetries. 

\begin{remark}
A C*-algebra $A$ is a Kre\u\i n C*-algebra with fundamental symmetry $\iota_A$ and so  a Kre\u\i n C*-algebra is a generalization of 
a C*-algebra. 
Let $(A,\alpha)$ be a Kre\u\i n C*-algebra with a given fundamental symmetry $\alpha$. Then we always have the decomposition 
$$
A=A_+\oplus A_-, \quad \text{where} \quad A_+=\{x\in A\mid\alpha(x)=x\},\ A_-=\{x\in A\mid\alpha(x)=-x\}.
$$
Indeed, if $a\in A_+\cap A_-$, then $a=\phi(a)=-a$ and so $a=0$. Moreover for all $a\in A$, 
$ a=\frac{a+\alpha(a)}2+\frac{a-\alpha(a)}2$ where $\frac{a+\alpha(a)}2\in A_+$ and $\frac{a-\alpha(a)}2\in A_-$.
\xqed{\lrcorner} 
\end{remark}

Every Kre\u\i n C*-algebra $A$ with a given fundamental symmetry $\alpha$ becomes naturally a C*-algebra, denoted here by 
$(A,\dag_\alpha)$ when equipped with a new involution $\dag_\alpha:A\to A$ as described in the following theorem. 

\begin{theorem}\label{tm6}
Let $(A,\alpha)$ be a Kre\u\i n C*-algebra with fundamental symmetry $\alpha$. Then $A$ becomes a C*-algebra when equipped with the new involution $\dag_\alpha$ defined by $\dag_\alpha: x\mapsto \alpha(x)^*$. Furthermore, the fundamental symmetry $\alpha$ is an automorphism of this 
C*-algebra and hence continuous in norm.
\end{theorem}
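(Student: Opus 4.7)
The plan is to verify directly that $\dag_\alpha$ satisfies the three axioms of an involution, then to check the C*-identity using the Kre\u\i n hypothesis $\|\alpha(a^*)a\|=\|a\|^2$, and finally to recognize $\alpha$ as a $*$-automorphism of the resulting C*-algebra so that its norm-continuity follows from the automatic continuity of $*$-homomorphisms between C*-algebras. The key observation that makes everything painless is that $\alpha$, being a $*$-automorphism of $(A,*)$, commutes with the original involution: $\alpha(x^*)=\alpha(x)^*$, and satisfies $\alpha^2=\iota_A$.

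First, I would establish that $\dag_\alpha$ is an involution. For involutivity, compute
\[
(x^{\dag_\alpha})^{\dag_\alpha}
=\alpha\!\left(\alpha(x)^*\right)^{\!*}
=\left(\alpha(\alpha(x))^*\right)^{\!*}
=(x^*)^*=x,
\]
using first that $\alpha$ commutes with $*$ and then that $\alpha^2=\iota_A$. Conjugate-linearity is immediate from linearity of $\alpha$ and conjugate-linearity of $*$. Anti-multiplicativity also follows in one line:
\[
(xy)^{\dag_\alpha}=\alpha(xy)^*=\bigl(\alpha(x)\alpha(y)\bigr)^{\!*}=\alpha(y)^*\alpha(x)^*=y^{\dag_\alpha}x^{\dag_\alpha}.
\]
Note that the underlying Banach algebra structure of $A$ is untouched: the norm, the product, and the sum are the same, so submultiplicativity of the norm and completeness are automatic and do not need to be re-verified.

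Second, I would check the C*-identity. By definition,
\[
\|x^{\dag_\alpha} x\|=\|\alpha(x)^*\,x\|=\|\alpha(x^*)\,x\|=\|x\|^2,
\]
where the second equality uses $\alpha(x^*)=\alpha(x)^*$ and the third is exactly the Kre\u\i n C*-condition $\|\alpha(a^*)a\|=\|a\|^2$ of the definition. Together with the previous step this shows that $(A,\dag_\alpha)$ is a C*-algebra.

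Third, I would verify that $\alpha$ is a $*$-homomorphism for the new involution. For every $x\in A$,
\[
\alpha(x^{\dag_\alpha})=\alpha\bigl(\alpha(x)^*\bigr)=\alpha(\alpha(x))^*=x^*,\qquad
\alpha(x)^{\dag_\alpha}=\alpha(\alpha(x))^*=x^*,
\]
so $\alpha(x^{\dag_\alpha})=\alpha(x)^{\dag_\alpha}$. Since $\alpha$ was already a (bijective) algebra homomorphism of $A$, it is a $*$-automorphism of the C*-algebra $(A,\dag_\alpha)$, and the last claim follows from the standard fact that any $*$-homomorphism between C*-algebras is contractive (in fact, here isometric). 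There is no genuine obstacle in this argument; the only thing to be careful about is the interplay between $\alpha$ and $*$, in particular the repeated use of $\alpha\circ *=*\circ\alpha$ and $\alpha^2=\iota_A$ to absorb the extra copies of $\alpha$ that appear whenever $\dag_\alpha$ is applied twice.
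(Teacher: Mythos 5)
Your proof is correct and follows essentially the same route as the paper: the paper leaves the verification that $(A,\dag_\alpha)$ is a C*-algebra as ``straightforward'' (which you carry out explicitly, correctly using $\alpha\circ * = *\circ\alpha$, $\alpha^2=\iota_A$, and the Kre\u\i n identity for the C*-condition), and your computation showing $\alpha(x^{\dag_\alpha})=\alpha(x)^{\dag_\alpha}$ is the same one the paper uses to conclude that $\alpha$ is a $\dag_\alpha$-automorphism and hence norm-continuous.
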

\begin{proof}
It is straightforward from the definition of Kre\u\i n C*-algebra to verify the first assertion. 
Since $\alpha(x^{\dag_\alpha})=\alpha(\alpha(x)^*)=\alpha(\alpha(x))^*=\alpha(x)^{\dag_\alpha}$, for all $x\in A$, we have that 
$\alpha$ is a $*$-homomorphism of the C*-algebra $(A,\dag_\alpha)$ into itself and hence it is continuous in norm. 
\end{proof}

\begin{remark}\label{re: c-k}
Note that given a $\dag$-automorphism $\alpha: A\to A$ of a C*-algebra $A$ (with involution denoted by $\dag$), we can naturally construct a Kre\u\i n C*-algebra $(A,*_\alpha)$ with involution $x^{*_\alpha}:=\alpha(x^\dag)$, for $x\in A$ and that $\alpha$ becomes a fundamental symmetry for this Kre\u\i n C*-algebra. 
\xqed{\lrcorner}
\end{remark}

Now we observe the algebraic structure of the even and odd part of $(A,\alpha)$.

\begin{theorem}\label{tm1}
Let $(A,\alpha)$ be a unital Kre\u\i n C*-algebra with a given fundamental symmetry $\alpha$.
Then $A_+$ is a unital C*-algebra and $A_-$ is a unital Hilbert C*-bimodule over $A_+$. 
\end{theorem}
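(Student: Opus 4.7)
The plan is to leverage the companion C*-algebra $(A,\dag_\alpha)$ produced in Theorem~\ref{tm6} so that every verification can be imported from the ordinary C*-algebra setting. I proceed in four stages.

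First, I would show that $A_+$ is a unital C*-algebra. Since $\alpha$ is norm continuous (by Theorem~\ref{tm6}), both $A_+$ and $A_-$ are closed in $A$ as the kernels of $\alpha-\iota_A$ and $\alpha+\iota_A$; thus they are Banach spaces with the inherited norm. The unit satisfies $\alpha(1_A)=1_A$, so $1_A\in A_+$, and $A_+$ is stable under multiplication and under the original involution $*$ (because $\alpha$ is a $*$-automorphism). Hence $(A_+,*,\|\cdot\|)$ is a unital Banach $*$-subalgebra of $A$. For $x\in A_+$, the identity $\alpha(x^*)=x^*$ gives $\|x^*x\|=\|\alpha(x^*)x\|=\|x\|^2$, which is exactly the C*-condition on $A_+$. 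Note that for $x\in A_+$ the two involutions $*$ and $\dag_\alpha$ coincide, so $A_+$ is in fact a C*-subalgebra of $(A,\dag_\alpha)$.

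Second, I would give $A_-$ its bimodule structure. Since $\alpha$ is an algebra homomorphism with $\alpha^2=\iota_A$, for $a\in A_+$ and $x\in A_-$ both $ax$ and $xa$ lie in $A_-$, and for $x,y\in A_-$ the product $xy$ lies in $A_+$. The multiplication in $A$ therefore restricts to compatible left and right actions of $A_+$ on $A_-$, making $_{A_+}(A_-)_{A_+}$ a unital $A_+$-$A_+$-bimodule.

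Third, I would define the two inner products through the auxiliary involution. For $x,y\in A_-$, set
\begin{align*}
\langle x\mid y\rangle_{A_+} &:= x^{\dag_\alpha}y=-x^*y, &
{}_{A_+}\langle x\mid y\rangle &:= x\, y^{\dag_\alpha}=-xy^*.
\end{align*}
Both values lie in $A_+$ because $\dag_\alpha$ preserves the $\pm 1$-eigenspaces of $\alpha$. The algebraic requirements (biadditivity, $A_+$-linearity on the appropriate side, conjugate symmetry with respect to $*$, and the bimodule compatibility ${}_{A_+}\langle x\mid y\rangle\cdot z=x\cdot\langle y\mid z\rangle_{A_+}$, together with $\langle xb\mid y\rangle_{A_+}=b^*\langle x\mid y\rangle_{A_+}$ and its left analogue) follow from direct substitution using associativity and $(uv)^*=v^*u^*$. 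Positivity and faithfulness are the delicate points, but they become free once we observe that $\langle x\mid x\rangle_{A_+}=x^{\dag_\alpha}x$ is a positive element in the C*-algebra $(A,\dag_\alpha)$, hence in its C*-subalgebra $A_+$; and $\langle x\mid x\rangle_{A_+}=0$ forces $x=0$ by the C*-identity in $(A,\dag_\alpha)$. An identical argument applies to ${}_{A_+}\langle\cdot\mid\cdot\rangle$.

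Finally, I would conclude with completeness and norm identification. The induced module norm is
\[
\|x\|_{A_-}=\sqrt{\|\langle x\mid x\rangle_{A_+}\|}=\sqrt{\|\alpha(x^*)x\|}=\|x\|,
\]
so the module norm agrees with the ambient norm on $A$; since $A_-$ is closed in the Banach space $A$, it is complete. This shows that $A_-$ is a unital Hilbert C*-bimodule over $A_+$. I expect the main obstacle to be bookkeeping in stage three, namely organizing the sign changes induced by passing between $*$ and $\dag_\alpha$ so that the positivity of the two inner products and the imprimitivity-style compatibility are deduced uniformly from the standard C*-theory of $(A,\dag_\alpha)$; once that translation is set up, every verification reduces to a one-line computation.
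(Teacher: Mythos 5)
Your proposal is correct and follows essentially the same route as the paper: it identifies $*$ with $\dag_\alpha$ on $A_+$ to get the C*-condition, and equips $A_-$ with the two inner products $\langle x\mid y\rangle_{A_+}=x^{\dag_\alpha}y$ and ${}_{A_+}\langle x\mid y\rangle=xy^{\dag_\alpha}$, exactly as in \eqref{eqip}. The only difference is that you spell out the verifications (closedness of the eigenspaces, positivity and faithfulness via the companion C*-algebra $(A,\dag_\alpha)$, and the coincidence of the module norm with the ambient norm) that the paper's proof leaves implicit.
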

\begin{proof}
Note first that on $A_+$ the two involutions $*$ and $\dag_\alpha$ coincide and since $A_+$ is closed under multiplication and involution, it is clearly a unital C*-algebra. 

We define right and left multiplications from $A_-\times A_+$ and $A_+\times A_-$ into $A_-$ as usual multiplications in $A$ and we define a pair of 
$A_+$-valued inner products from $A_-\times A_-$ into $A_+$ by 
\begin{equation}\label{eqip}
_{A_+}\langle x\mid y \rangle=xy^{\dag_\alpha}\quad  \text{and}  \quad 
\langle x\mid y \rangle_{A_+}=x^{\dag_\alpha}y
\end{equation} 
for all $x,y\in A_-$. With this definitions $A_-$ has the structure of unital Hilbert 
C*-bimodule on $A_+$. 
\end{proof}

Additionally the Hilbert C*-bimodule $A_-$ has the property,  
$$
_{A_+}\langle x\mid y \rangle z=(xy^{\dag_\alpha})z=x(y^{\dag_\alpha}z)=x\langle y\mid z \rangle_{A_+}. 
$$
for all $x,y,z\in A$.

For any Kre\u\i n C*-algebra equipped with a fundamental symmetry $\alpha$, the odd part $A_-$ in the fundamental decomposition  $A=A_+\oplus A_-$ is a Hilbert C*-bimodule but it is not in general an imprimitivity bimodule because $A_-$ might not be a full bimodule over $A_+$. In the following we will usually assume this further property. 

We will see later that the commutativity of $A_+$ and imprimitivity of $A_-$ play important roles in order to get our spectral theory for 
Kre\u\i n C*-algebras.
\begin{definition}
Let $(A,\alpha)$ be a Kre\u\i n C*-algebra with a given fundamental symmetry $\alpha$. A Kre\u\i n C*-algebra $(A,\alpha)$ is said to be 
\textbf{imprimitive} or \emph{full} if its odd part $A_-$ is an imprimitivity bimodule over $A_+$. 
We say that $(A,\alpha)$ is \textbf{rank-one} if $\dim A_+=\dim A_-=1$ as complex vector spaces. 
\end{definition}

\begin{theorem}
Let $(A,\alpha)$ be a Kre\u\i n C*-algebra with a given fundamental symmetry $\alpha$. The algebra $A$ is commutative if and only if the even part 
$A_+$ is a commutative unital C*-algebra and the odd part is a symmetric Hilbert C*-bimodule over $A_+$. 
In particular a rank-one Kre\u\i n C*-algebra is always commutative. 
\end{theorem}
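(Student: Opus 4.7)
The statement has two implications plus a corollary; the natural strategy is to decompose arbitrary elements via $A=A_+\oplus A_-$ and verify commutation on each of the four bilinear pieces $A_\pm\times A_\pm$. A preliminary observation that I would use throughout is that the involution $\dag_\alpha$ of Theorem~\ref{tm6} preserves both $A_+$ and $A_-$: since $\alpha$ commutes with $*$, both summands are $*$-stable, so $\dag_\alpha|_{A_+}=*|_{A_+}$ and $\dag_\alpha|_{A_-}=-*|_{A_-}$, and in particular $\dag_\alpha$ restricts to an involutive bijection of $A_-$.

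For the ``only if'' direction, commutativity of $A_+$ is immediate since it is a subalgebra of the commutative $A$. For $a\in A_+$ and $x\in A_-$, the identity $ax=xa$ is automatic. For the symmetric inner product condition, I would plug into the formulas~\eqref{eqip}: both ${_{A_+}}\langle x\mid y\rangle=xy^{\dag_\alpha}$ and $\langle y\mid x\rangle_{A_+}=y^{\dag_\alpha}x$ are products of two elements of $A_-$, which commute inside the commutative $A$, hence are equal.

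For the ``if'' direction, I would expand $a=a_++a_-$ and $b=b_++b_-$ and compare the four bilinear terms in $ab$ and $ba$: the $A_+\times A_+$ piece commutes by hypothesis, the two mixed pieces commute by the symmetric bimodule condition, and only the $A_-\times A_-$ piece remains. The key move there is to substitute $y:=b_-^{\dag_\alpha}\in A_-$ in the identity $xy^{\dag_\alpha}=y^{\dag_\alpha}x$ extracted via~\eqref{eqip} from the symmetric inner product assumption, which after using involutivity of $\dag_\alpha$ reduces to $x b_-=b_- x$, and then to take $x=a_-$. I expect this last step to be the only subtle move in the proof, since it depends on exploiting $\dag_\alpha$ as a bijection of $A_-$ rather than as an external operation.

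Finally, the rank-one claim follows directly from the algebraic structure: if $\dim A_+=\dim A_-=1$, then $A$ is the two-dimensional complex vector space spanned by $1_A$ and any fixed nonzero $e\in A_-$, and two such elements plainly commute because $1_A$ is central and $e$ commutes with itself.
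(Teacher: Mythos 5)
Your proposal is correct and follows essentially the same route as the paper: the ``only if'' direction reads the symmetry of the inner products off commutativity of products in $A_-$, and the ``if'' direction reduces the $A_-\times A_-$ piece to the substitution $y:=b_-^{\dag_\alpha}$ in ${}_{A_+}\langle x\mid y\rangle=\langle y\mid x\rangle_{A_+}$, which is exactly the paper's computation $a_-b_-={}_{A_+}\langle a_-\mid b_-^{\dag_\alpha}\rangle=\langle b_-^{\dag_\alpha}\mid a_-\rangle_{A_+}=b_-a_-$. Your explicit treatment of the rank-one corollary (via $A=\mathbb{C}1_A\oplus\mathbb{C}e$) and the preliminary remark that $\dag_\alpha$ restricts to an involutive bijection of $A_-$ are small additions the paper leaves implicit, and both are sound.
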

\begin{proof}
If $A$ is commutative, clearly $A_-$ is a symmetric bimodule over the commutative algebra $A_+$ furthermore the inner products
defined in \eqref{eqip} above satisfy $_{A_+}\langle x\mid y\rangle=xy^{\dag_\alpha}=y^{\dag_\alpha}x=\langle y\mid x\rangle_{A_+}$ and hence $A_-$ is a symmetric C*-bimodule over the commutative C*-algebra $A_+$. 
Conversely, if $A_-$ is a symmetric C*-bimodule, 
$a_-b_-={}_{A_+}\langle a_-\mid b_-^{\dag_\alpha}\rangle=\langle b_-^{\dag_\alpha}\mid a_-\rangle_{A_+}=b_-a_-$, for all $a_-,b_-\in A_-$. 
Now, for all $a,b\in A$ with decompositions $a=a_++a_-$, $b=b_++b_-$ an easy computation shows: 
$ab=(a_++a_-)(b_++b_-)=a_+b_++a_+b_-+a_-b_++a_-+b_-=b_+a_++b_-a_++b_+a_-+b_-a_-=ba$.
\end{proof}

\begin{definition}
We will say that a Kre\u\i n C*-algebra is \emph{symmetric} if there exists an \emph{odd symmetry} i.e.~a linear map 
$\epsilon: A\to A$ such that $\epsilon \circ\epsilon=i_A$, $\epsilon(x^*)=-\epsilon(x)^*$, for all $x\in A$, 
$\epsilon(xy)=\epsilon(x)y=x\epsilon(y)$, for all $x,y\in A$ and if there exists a fundamental symmetry $\alpha$ such that 
$\epsilon\circ\alpha=-\alpha\circ\epsilon$ (in this case we say that the symmetry and the odd symmetry are compatible). 
\end{definition}

\begin{lemma}\label{lmanti}
If $\epsilon$ is an odd symmetry of a symmetric Kre\u\i n C*-algebra compatible with the symmetry $\alpha$, then $\epsilon$ is always isometric.
\end{lemma}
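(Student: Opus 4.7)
The strategy is to pass to the associated C*-algebra $(A,\dag_\alpha)$ constructed in Theorem~\ref{tm6}, whose C*-norm coincides with the Kre\u\i n norm of $A$. Since then $\|a\|^2=\|a^{\dag_\alpha}a\|$ for every $a\in A$, it will be enough to prove the algebraic identity $\epsilon(x)^{\dag_\alpha}\epsilon(x)=x^{\dag_\alpha}x$ inside $A$; the isometry property $\|\epsilon(x)\|=\|x\|$ is then immediate by taking norms on both sides. The whole argument will rest on two elementary intertwining identities.

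First I would show that $\epsilon$ commutes with the new involution, $\epsilon(x)^{\dag_\alpha}=\epsilon(x^{\dag_\alpha})$ for every $x\in A$. Unwinding the definition $\dag_\alpha\colon x\mapsto\alpha(x)^*$, using $\epsilon(x^*)=-\epsilon(x)^*$ to rewrite $\epsilon(x)^*=-\epsilon(x^*)$, and finally invoking the compatibility $\epsilon\circ\alpha=-\alpha\circ\epsilon$ to move $\alpha$ across $\epsilon$, the two minus signs cancel and the equality drops out. Next I would establish the multiplicative identity $\epsilon(x)\epsilon(y)=xy$ for all $x,y\in A$, using only the Leibniz-type property $\epsilon(xy)=\epsilon(x)y=x\epsilon(y)$ and $\epsilon\circ\epsilon=i_A$: applying $\epsilon(a)b=\epsilon(ab)$ with $b=\epsilon(y)$ gives $\epsilon(x)\epsilon(y)=\epsilon\bigl(x\epsilon(y)\bigr)$, and applying the symmetric form $a\epsilon(b)=\epsilon(ab)$ inside yields $x\epsilon(y)=\epsilon(xy)$, so that $\epsilon(x)\epsilon(y)=\epsilon(\epsilon(xy))=xy$.

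Combining the two previous identities gives
\[
\epsilon(x)^{\dag_\alpha}\,\epsilon(x)\;=\;\epsilon(x^{\dag_\alpha})\,\epsilon(x)\;=\;x^{\dag_\alpha}x,
\]
and the Kre\u\i n C*-norm identity applied on both sides produces $\|\epsilon(x)\|^2=\|x\|^2$, which is the claim. There is no serious obstacle in this scheme; the only point that needs genuine care is the bookkeeping of signs when pushing $\alpha$ and $*$ past $\epsilon$, since each interchange introduces a minus which must cancel precisely in the composition. The multiplicative identity $\epsilon(x)\epsilon(y)=xy$, though it looks a bit striking at first sight, is forced by the combination of the Leibniz-type axiom with $\epsilon^{2}=i_A$ and requires no hypothesis beyond what is already in the definition of odd symmetry.
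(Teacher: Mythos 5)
Your proposal is correct and follows essentially the same route as the paper: both pass to the associated C*-algebra $(A,\dag_\alpha)$, use the identity $\epsilon(x)^{\dag_\alpha}=\epsilon(x^{\dag_\alpha})$ (which the paper uses silently and you justify by the sign cancellation between $\epsilon(x^*)=-\epsilon(x)^*$ and $\epsilon\circ\alpha=-\alpha\circ\epsilon$), and then reduce $\epsilon(x^{\dag_\alpha})\epsilon(x)$ to $\epsilon(\epsilon(x^{\dag_\alpha}x))=x^{\dag_\alpha}x$ via the Leibniz property and $\epsilon^2=i_A$. Your write-up merely makes explicit the intermediate steps that the paper compresses into a single chain of equalities.
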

\begin{proof}
For all $x\in A$, $\|\epsilon(x)\|^2=\|\epsilon(x)^{\dag_\alpha}\epsilon(x)\|=\|\epsilon(x^{\dag_\alpha})\epsilon(x)\|
=\|\epsilon\circ\epsilon(x^{\dag_\alpha }x)\|=\|x^{\dag_\alpha}x\|=\|x\|^2$. 
\end{proof}

Next we give an example of rank-one unital full symmetric Kre\u\i n C*-algebra which has an important role in the proof of our spectral theory. 
Since, as we will see, in this case, the fundamental symmetry is necessarily unique, this is just a well-known example of commutative 
$\mathbb Z_2$-graded \hbox{C*-algebra}. Although all the properties described in the following three theorems are ``standard'' from the theory of 
$\mathbb Z_2$-graded C*-algebras, for the convenience of the reader we present here a direct proof af all of them in the ``spirit'' of Kre\u\i n 
C*-algebras. 

\begin{theorem}\label{tm2}
There is a rank-one unital symmetric Kre\u\i n C*-algebra.
\end{theorem}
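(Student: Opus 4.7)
The plan is to construct an explicit example. Take the two-dimensional commutative complex algebra $\KK := \CC \oplus \CC$ with componentwise addition and multiplication and the supremum norm $\|(u,v)\| := \max(|u|,|v|)$, and equip it with the ``swapped'' involution
\[
(u,v)^* := (\bar v,\bar u),
\]
which is readily seen to be conjugate-linear, involutive, and antimultiplicative. Completeness and submultiplicativity follow from the corresponding properties of $\CC$ under the sup norm, so $\KK$ is a unital Banach $*$-algebra.

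Next I would exhibit the fundamental symmetry $\alpha(u,v) := (v,u)$. It is a unital $*$-automorphism with $\alpha\circ\alpha = \id$ (straightforward from the definitions), and for every $x=(u,v)\in \KK$ one computes
\[
\alpha(x^*)\,x = \alpha(\bar v,\bar u)\,(u,v) = (\bar u,\bar v)\,(u,v) = (|u|^2,|v|^2),
\]
so $\|\alpha(x^*)x\| = \max(|u|^2,|v|^2) = \|x\|^2$. Thus $(\KK,\alpha)$ is a Kre\u\i n C*-algebra. Its even and odd parts are
\[
\KK_+ = \{(a,a)\st a\in\CC\}, \qquad \KK_- = \{(a,-a)\st a\in\CC\},
\]
each of complex dimension one, which establishes that $\KK$ is rank-one.

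Finally I would introduce the candidate odd symmetry $\epsilon(u,v) := (u,-v)$ and verify the defining properties one by one. Linearity and $\epsilon\circ\epsilon=\id$ are immediate. For the Leibniz property, $\epsilon((u,v)(p,q)) = (up,-vq) = (u,-v)(p,q) = (u,v)(p,-q)$, giving $\epsilon(xy)=\epsilon(x)y=x\epsilon(y)$. For the involution identity, $\epsilon(x^*) = \epsilon(\bar v,\bar u) = (\bar v,-\bar u)$ and $-\epsilon(x)^* = -(u,-v)^* = -(\overline{-v},\bar u) = (\bar v,-\bar u)$, which agree. Compatibility with $\alpha$ is the anticommutation $\epsilon\alpha(u,v) = \epsilon(v,u) = (v,-u)$ while $-\alpha\epsilon(u,v) = -\alpha(u,-v) = -(-v,u) = (v,-u)$, so $\epsilon\alpha = -\alpha\epsilon$. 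Hence $(\KK,\alpha,\epsilon)$ is the desired rank-one unital symmetric Kre\u\i n C*-algebra.

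There is no genuine obstacle here; the statement is an existence claim and the whole proof reduces to exhibiting $\KK$ and carrying out the (purely algebraic) verifications above. The only mildly delicate point is guessing the correct involution: the naive componentwise conjugation fails the Kre\u\i n identity $\|\alpha(x^*)x\|=\|x\|^2$, so one must twist by $\alpha$ as above, which is precisely the construction foreshadowed in Remark~\ref{re: c-k}.
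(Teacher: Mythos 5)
Your proof is correct and takes essentially the same approach as the paper: your algebra $\CC\oplus\CC$ with the swapped involution $(u,v)^*=(\bar v,\bar u)$ is exactly the paper's algebra $\KK$ of symmetric matrices $T_{a,b}$ rewritten in the eigenbasis $T_{a,b}\mapsto(a+b,a-b)$, under which the operator norm becomes the sup norm, the paper's $\gamma$ becomes your $\alpha$, and the paper's $\epsilon(T_{a,b})=T_{b,a}$ becomes your $\epsilon(u,v)=(u,-v)$. All of your verifications check out, and they are in fact more complete than the paper's own proof, which leaves the Kre\u\i n identity and the odd-symmetry axioms to the reader.
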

\begin{proof}
For each $a$, $b \in \mathbb{C}$, let 
$T_{a,b} = \begin{bmatrix} a & b \\ b &a \end{bmatrix}.$ 
Let $\mathbb{K}=\big\{T_{a,b} : a, b \in \mathbb{C} \big\}$  with the usual matrix operations of addition and multiplication. 
We define the involution by $T_{a,b}^* = T_{\bar{a},-\bar{b}}$. Furthermore, we equip $\mathbb{K}$ 
with the operator norm and choose as fundamental symmetry $\gamma:\mathbb{K}\rightarrow\mathbb{K}$ defined by $\gamma(\begin{bmatrix} a & b \\ b &a \end{bmatrix})=\begin{bmatrix} a & -b \\ -b &a \end{bmatrix}$. Then we can write $\mathbb{K}=\mathbb{K}_+\oplus\mathbb{K}_-$ where 
\[  \mathbb{K}_+=\left\{ \begin{bmatrix} a & 0 \\ 0 &a \end{bmatrix}: a\in \mathbb{C}\right\} \quad \text{and} \quad \mathbb{K}_-
=\left\{ \begin{bmatrix} 0 & b \\ b &0 \end{bmatrix}: b\in \mathbb{C}\right\}. 
\]   
The symmetry of the algebra $\mathbb K$ can be checked defining $\epsilon(T_{a,b}):=T_{b,a}$. 
\end{proof}

\begin{theorem}\label{tm7}
The identity $\iota_{\mathbb{K}}$ and $\gamma$ are the only two unital $*$-automorphisms from $\mathbb{K}$ onto itself. Furthermore, $\gamma$ is the unique fundamental symmetry on $\mathbb{K}$. 
\end{theorem}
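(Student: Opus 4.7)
The plan is to exploit the extreme rigidity forced by being a unital $*$-homo\-mor\-phism on such a tiny algebra: first pin down where the even part must go, then reduce the whole problem to identifying where the single ``generator'' $e:=T_{0,1}$ of the odd part can land.

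First I will observe that, since $\mathbb{K}_+=\{T_{a,0}:a\in\CC\}=\CC\cdot 1_{\mathbb{K}}$ is one-dimensional over $\CC$ and any unital $*$-automorphism $\phi:\mathbb{K}\to\mathbb{K}$ is $\CC$-linear with $\phi(1_{\mathbb{K}})=1_{\mathbb{K}}$, the map $\phi$ is automatically the identity on $\mathbb{K}_+$. Because $\mathbb{K}=\mathbb{K}_+\oplus\mathbb{K}_-$ and $\mathbb{K}_-=\CC\cdot e$ with $e:=T_{0,1}$, the automorphism $\phi$ is completely determined by the single value $\phi(e)\in\mathbb{K}$.

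Next I will use the defining algebraic relations of $e$ to constrain $\phi(e)$. Direct computation with the multiplication rule $T_{a,b}T_{c,d}=T_{ac+bd,\,ad+bc}$ gives $e^2=T_{1,0}=1_{\mathbb{K}}$, and from the prescribed involution $T_{a,b}^*=T_{\bar a,-\bar b}$ we read $e^*=-e$. Setting $\phi(e)=T_{a,b}$ and applying $\phi$ to these two relations yields
\begin{align*}
T_{a,b}^2 &= T_{a^2+b^2,\,2ab}=T_{1,0}, \\
T_{\bar a,-\bar b} &= T_{a,b}^* = \phi(e^*) = -\phi(e)=T_{-a,-b}.
\end{align*}
The first line forces $a^2+b^2=1$ and $2ab=0$; the second forces $a\in i\RR$ and $b\in\RR$. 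If $b=0$ then $a=\pm i$, but then $a^2+b^2=-1\ne 1$, contradiction; hence $a=0$ and $b^2=1$, i.e.\ $\phi(e)\in\{e,-e\}$. The choice $\phi(e)=e$ gives $\phi=\iota_{\mathbb{K}}$ and the choice $\phi(e)=-e$ gives, for $T_{a,b}=a\cdot 1_{\mathbb{K}}+b\cdot e$, the formula $\phi(T_{a,b})=T_{a,-b}=\gamma(T_{a,b})$, so $\phi=\gamma$.

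For the second assertion, any fundamental symmetry is in particular a unital $*$-automorphism, so by the first part it is either $\iota_{\mathbb{K}}$ or $\gamma$. To rule out $\iota_{\mathbb{K}}$ I will exhibit a single element violating $\|\iota_{\mathbb{K}}(x^*)x\|=\|x\|^2$: take $x=T_{1,1}$, so that $x^*=T_{1,-1}$ and $x^*x=T_{1-1,\,-1+1}=0$, giving $\|\iota_{\mathbb{K}}(x^*)x\|=0$, while $x$ viewed as a matrix has operator norm $2\ne 0$. Thus $\gamma$ is the only candidate left, and since the existence of $\gamma$ as a fundamental symmetry was already established in Theorem~\ref{tm2}, this completes uniqueness.

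There is no serious obstacle here; the only thing to be careful about is the case analysis in the ``$2ab=0$'' step, where the spurious branch $b=0$, $a=\pm i$ must be discarded by checking against $a^2+b^2=1$ rather than forgotten. The rest is direct substitution using the prescribed multiplication and involution on $\mathbb{K}$.
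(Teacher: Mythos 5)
Your proof is correct and takes essentially the same route as the paper: both arguments pin down $\phi(e)$ for $e=T_{0,1}$ using the relations $e^2=1_{\mathbb{K}}$ and $e^*=-e$, forcing $\phi(e)=\pm e$ and hence $\phi\in\{\iota_{\mathbb{K}},\gamma\}$. You in fact supply a step the paper leaves implicit --- ruling out $\iota_{\mathbb{K}}$ as a fundamental symmetry via the explicit witness $x=T_{1,1}$ with $x^*x=0$ but $\|x\|=2$ --- and the only cosmetic slip is the intermediate claim ``$a=\pm i$'' in the $b=0$ branch, which should just read that $a\in i\mathbb{R}$ forces $a^2+b^2=a^2\leq 0\neq 1$.
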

\begin{proof}
Let $\phi$ be the unital $*$-homomorphism from $\mathbb{K}$ into itself. 

Suppose that $\phi(e)=\phi(\begin{bmatrix}0& 1\\1 &0 \end{bmatrix})=\begin{bmatrix}a& b\\b &a \end{bmatrix}$ for some $a,b\in \mathbb{C}$.
Since $\phi(e)^2=1$ and $\phi(e^*)=-\phi(e)$, we have $(a,b)=(0,1)$ or $(a,b)=(0,-1)$. Then $\phi=\iota_{\mathbb{K}}$ or 
$\gamma$, and this proves the theorem. 
\end{proof}

We can characterize the rank-one unital Kre\u\i n \hbox{C*-algebras} as in Gel'fand-Mazur Theorem for \hbox{C*-algebras}. 
\begin{theorem}\label{tm3}
Every rank-one unital Kre\u\i n C*-algebra is isomorphic to $\mathbb{K}$.
\end{theorem}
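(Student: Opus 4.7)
The plan is to exhibit an explicit \mbox{*}-isomorphism $\phi: A \to \mathbb{K}$ by rescaling a generator of the odd part to match the standard generator $E = T_{0,1}$ of $\mathbb{K}_-$. Since $A_+$ is a unital C*-algebra with $\dim A_+ = 1$, we have $A_+ = \mathbb{C}\cdot 1_A$. Fix any nonzero $e \in A_-$; since $\dim A_- = 1$, $\{1_A, e\}$ is a basis of $A$. Our target element $e' \in A_-$ should satisfy $(e')^2 = 1_A$ and $(e')^* = -e'$, in analogy with $E^2 = I$ and $E^* = -E$ in $\mathbb{K}$.

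First I would extract the structural constants. The involution preserves $A_-$ (because $\alpha(x^*) = \alpha(x)^* = -x^*$ when $x \in A_-$), so $e^* = \lambda e$ for some $\lambda \in \mathbb{C}$; applying $*$ twice gives $|\lambda|^2 = 1$. The bimodule structure from Theorem~\ref{tm1} gives $A_- \cdot A_- \subseteq A_+$, so $e^2 = \mu\cdot 1_A$ for some $\mu\in\mathbb{C}$. Now $\langle e \mid e\rangle_{A_+} = e^{\dag_\alpha} e = -e^* e = -\lambda\mu\cdot 1_A$, which must be a positive element of $A_+ \cong \mathbb{C}$; the C*-axiom on $(A,\dag_\alpha)$ from Theorem~\ref{tm6} further forces $\|e\|^2 = |{-\lambda\mu}|$, so $\mu \neq 0$ and $-\lambda\mu = |\mu| > 0$.

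Next I would perform the rescaling. Choose $\tau\in\mathbb{C}$ with $\tau^2 = -\lambda$ (possible since $|\lambda|=1$) and set $e' := |\mu|^{-1/2}\tau\, e$. A direct check gives $(e')^2 = |\mu|^{-1}\tau^2\mu\, 1_A = |\mu|^{-1}(-\lambda\mu)\,1_A = 1_A$ and $(e')^* = |\mu|^{-1/2}\bar\tau\,\lambda e = -|\mu|^{-1/2}\tau\, e = -e'$, using $\bar\tau\lambda = \bar\tau(-\tau^2) = -\tau$ (since $|\tau|=1$, which follows from $|\tau^2| = |\lambda| = 1$). Then define $\phi: A\to\mathbb{K}$ linearly by $\phi(a\cdot 1_A + b\cdot e') := T_{a,b}$, which is a bijection of complex vector spaces.

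The remaining work is routine verification. Multiplicativity reduces to checking $\phi((e')^2) = \phi(e')^2$, which holds because both sides equal $T_{1,0}$. The \mbox{*}-preservation reduces to $\phi((e')^*) = \phi(-e') = T_{0,-1} = T_{0,1}^* = \phi(e')^*$. Finally, $\phi$ intertwines the fundamental symmetries ($\phi\circ\alpha = \gamma\circ\phi$, since $\alpha(e')=-e'$ corresponds to $\gamma(T_{0,1}) = T_{0,-1}$), so it intertwines the associated dagger involutions; as a \mbox{*}-isomorphism between the C*-algebras $(A,\dag_\alpha)$ and $(\mathbb{K},\dag_\gamma)$, it is automatically isometric, hence a Kre\u\i n C*-algebra isomorphism. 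The only delicate step is the sign bookkeeping in the rescaling: everything is built to ensure $-\lambda\mu > 0$ so that the square roots can be chosen coherently to produce an $e'$ with $(e')^2 = 1_A$ and $(e')^*=-e'$ simultaneously.
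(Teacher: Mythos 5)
Your proof is correct, but it takes a genuinely different route from the paper's. The paper normalizes $\|e\|=1$, parametrizes the structure constants as $e^*=\pm e^{-i\theta}e$ and $e\cdot e=e^{i\theta}1$, computes the norm explicitly as $\|a1+be\|=\max\{|a+b|,|a-b|\}$ via left-multiplication operators, and then eliminates the unwanted cases one by one: the case $\theta=\pi$ fails submultiplicativity of the norm, and the case $e^*=+e$ admits no fundamental symmetry; only the surviving case $e^2=1$, $e^*=-e$ maps onto $\mathbb{K}$. You instead pass immediately to the auxiliary C*-algebra $(A,\dag_\alpha)$ of Theorem~\ref{tm6} and use positivity together with the C*-identity for $e^{\dag_\alpha}e=-\lambda\mu\,1$ to force $-\lambda\mu=\|e\|^2>0$ in one stroke; this simultaneously rules out all degenerate cases and supplies exactly the normalization needed for the rescaling $e'=|\mu|^{-1/2}\tau e$. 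Your sign bookkeeping checks out ($\bar\tau\lambda=-\tau$ since $|\tau|=1$), the structure constants of $\{1_A,e'\}$ then match those of $\{T_{1,0},T_{0,1}\}$ exactly, and isometry follows since $\phi$ intertwines $\alpha$ with $\gamma$ and is therefore a $*$-isomorphism of the associated C*-algebras. What your argument buys is brevity and conceptual clarity: no explicit norm formula and no case analysis are needed, because positivity in $(A,\dag_\alpha)$ does the exclusion automatically. What the paper's longer argument buys is a classification of which candidate rank-one graded $*$-algebra structures actually admit a Kre\u\i n C*-norm and a fundamental symmetry, information that is invisible in your streamlined version.
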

\begin{proof}
By Gel'fand-Mazur Theorem, $A_+=\mathbb{C}1$. Since $\dim A_-=1$, there is a non-zero element $e$ such that $\langle e\rangle=A_-$. WLOG, we choose $e$ such that $\|e\|=1$.
Since $e^*\in A_-$ and $e\cdot e\in A_+$, we have $e^*=\alpha e$ and $e\cdot e=\beta$ for some $\alpha,\beta\in \mathbb{C}$.
From properties in Kre\u\i n C*-algebra, we can verify that $\alpha=\pm \exp(i\theta)$ and $\beta=\exp(i\theta)$ for some $\theta\in [0,2\pi)$. Now we have that \hbox{$e^*=\pm\exp(-i\theta)e$} and $e\cdot e=\exp(i\theta)$ for some $\theta\in [0,2\pi)$. We use the notation $A_{\pm \theta}$ corresponding with the preceding operations. 
Since $\|x\|=\|L_x\|$ where $L_x$ is the left multiplication operator, we can prove that \hbox{$\|x\|=\max\{|a+b|,|a-b|\}$}. It is easy to show that $\theta$ must be zero or $\pi$.

For case $\theta=\pi$, we let $x=y=i+e$, then
$$
\|xy\|=\max\{|-2+2i|,|-2-2i|\}=2\sqrt{2}\nleq 2=\max\{|i+1|^2,|i-1|^2\}=\|x\|\|y\|.
$$
Hence $A_{\pm\pi}$ is not a Kre\u\i n C*-algebra.

For $A_{-0}$, we have $e \cdot e=1$ and $e^*=-e$.
Since every element in $A_{0_-}$ is in the form $m1+ne$, where $m$ and $n$ are in $\mathbb{C}$, 
we define $f:A_{0_-}\rightarrow \mathbb{K}$ by 
$f(1)=\begin{bmatrix} 1 & 0 \\ 0 &1 \end{bmatrix}$ and $f(e)=\begin{bmatrix} 0 & 1 \\ 1 &0  \end{bmatrix}$. 
Then $f$ is an isomorphism between the Kre\u\i n C*-algebras $A_{0_-}$ and $\mathbb{K}$. 
Since $\mathbb{K}$ is Kre\u\i n C*-algebra, $A_{-0}$ is a Kre\u\i n C*-algebra too.

For $A_{+\theta}$, we have $e \cdot e=1$ and $e^*=e$. Suppose that $\gamma$ is the fundamental symmtry on it. 
Note that $\gamma(x^{*_-})=x^{*_+}$ for all $x\in A$ where $*_-$ and $*_+$ are involutive on $A_{-\theta}$ and $A_{+\theta}$, respectively.
Then
$$
\|x\|^2=\|\gamma(x^{*_+})x\|=\|x^{*_-}x\|,
$$
which contradicts to the fact that $A_-$ is the nontrivial Kre\u\i n C*-algebra. Since $\gamma$ is the only possible fundamental symmetry on $A$, we can conclude that $A_+$ is not a Kre\u\i n C*-algebra. 
\end{proof}

\begin{theorem}
The space $C(M,K)$ of all continuous functions from a compact Hausdorff space $M$ into a Kre\u\i n C*-algebra 
$K$ is a unital Kre\u\i n C*-algebra. 
\end{theorem}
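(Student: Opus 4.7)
The plan is to transfer the Kre\u\i n C*-algebra structure from $K$ to $C(M,K)$ pointwise and verify the defining properties by reducing them, at each point $m\in M$, to the corresponding properties in $K$. Concretely, I would equip $C(M,K)$ with the usual pointwise operations: for $f,g\in C(M,K)$, $c\in\CC$ and $m\in M$, set $(f+g)(m):=f(m)+g(m)$, $(cf)(m):=cf(m)$, $(fg)(m):=f(m)g(m)$, $f^*(m):=f(m)^*$, with unit $1(m):=1_K$ and norm $\|f\|:=\sup_{m\in M}\|f(m)\|_K$. I would fix a fundamental symmetry $\alpha$ of $K$ and propose, as candidate fundamental symmetry of $C(M,K)$, the map $\tilde\alpha(f):=\alpha\circ f$.

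First I would check that these operations are well-defined, i.e.\ that the pointwise product, involution, and $\tilde\alpha$ send continuous functions to continuous functions. Continuity of $f+g$, $cf$, $fg$ follows from joint continuity of the vector space operations and of multiplication in $K$ (the latter by sub-multiplicativity of the norm of $K$, since $K$ is a Banach algebra under the C*-norm it acquires via Theorem~\ref{tm6}); continuity of $f^*$ follows from isometry of the involution with respect to $\dag_\alpha$ together with continuity of $\alpha$ (Theorem~\ref{tm6}); continuity of $\tilde\alpha(f)=\alpha\circ f$ is again a consequence of Theorem~\ref{tm6}. Then, since $M$ is compact, $\|f\|=\sup_m\|f(m)\|$ is finite and defines a submultiplicative, involution-compatible norm under which $C(M,K)$ is complete (standard uniform-limit argument: a Cauchy sequence converges uniformly to a continuous $K$-valued limit because $K$ is itself a Banach space). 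The axioms of associative unital $*$-algebra pass through trivially by pointwise evaluation.

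It remains to verify that $\tilde\alpha$ is a fundamental symmetry. The identities $\tilde\alpha\circ\tilde\alpha=\iota$, $\tilde\alpha(fg)=\tilde\alpha(f)\tilde\alpha(g)$, and $\tilde\alpha(f^*)=\tilde\alpha(f)^*$ all follow pointwise from the corresponding identities satisfied by $\alpha$ on $K$. The Kre\u\i n norm condition is the step that really uses the assumption on $K$: for every $f\in C(M,K)$,
\begin{equation*}
\|\tilde\alpha(f^*)f\|=\sup_{m\in M}\|\alpha(f(m)^*)\,f(m)\|_K=\sup_{m\in M}\|f(m)\|_K^{\,2}=\Bigl(\sup_{m\in M}\|f(m)\|_K\Bigr)^{\!2}=\|f\|^2,
\end{equation*}
where the middle equality uses exactly the fundamental-symmetry property of $\alpha$ on $K$. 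This completes the verification.

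I do not expect any serious obstacle: the proof is essentially a routine pointwise lift. The only points requiring a small amount of care are (i) the continuity of $\tilde\alpha(f)$ and of $f^*$, which rely on the continuity in norm of $\alpha$ established in Theorem~\ref{tm6} rather than on any direct continuity assumption on the original involution $*$, and (ii) the completeness of $C(M,K)$ under the uniform norm, which uses compactness of $M$ and completeness of $K$. Once these technical points are settled, the Kre\u\i n norm identity is immediate from its pointwise counterpart.
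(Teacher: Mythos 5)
Your proposal is correct and follows essentially the same route as the paper: pointwise operations, the sup norm, the candidate fundamental symmetry $f\mapsto\alpha\circ f$, and a pointwise verification of the Kre\u\i n norm identity. The only difference is that you spell out the continuity of $f^*$ and $\alpha\circ f$ via Theorem~\ref{tm6}, a point the paper leaves as ``easy to check.''
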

\begin{proof}
Let $\gamma$ be a fundamental symmetry of the Kre\u\i n C*-algebra $K$ and $M$ a compact Hausdorff space.
We define all the operations and the norm as follows:
\begin{align*}
(f+g)(x)&=f(x)+g(x) 
& 
(fg)(x)&=f(x)g(x)
\\
(kf)(x)&=kf(x)
& 
f^{*_C}(x)&=f(x)^{*_K}\\
\|f\|_C&=\sup_{x\in M} \|f(x)\|_K,
& 
\end{align*}
for all $k\in \mathbb{C}$, $x\in M$ and for all $f,g\in C(M,K)$. 
It is easy to check that $C(M,K)$ is a Banach algebra and a $*$-algebra with the operations and norm above.

To see that $C(M,K)$ is a Kre\u\i n C*-algebra, consider the map $\phi_C$ from $C(M,K)$ into itself defined by 
$\phi_C(f)=\gamma\circ f$. Since, for all $x\in M$, 
\begin{align*}
\phi_C(fg)(x)&=\gamma\circ(fg)(x)=\gamma(f(x)g(x))=\gamma(f(x))\gamma(g(x))=\phi_C(f)\phi_C(g)(x) \\
\phi_C(f^{*_C})(x)&=(\gamma\circ f^{*_C})(x)=\gamma(f(x)^{*_K})=\gamma(f(x))^{*_K}\\
&=(\gamma\circ f)(x)^{*_K}=(\gamma\circ f)^{*_C}(x)=\phi_C(f)^{*_C}(x),   \\
\phi_C(1_C)(x)&=\gamma(1_C(x))=\gamma(1_K)=1_K=1_C(x),         	
\end{align*}
$\phi_C$ is  a unital $*$-homomorphism that is an involutive $*$-automorphism because 
$$
\phi_C\circ\phi_C(f)=\phi_C(\gamma\circ f)=\gamma\circ\gamma\circ f=f, \quad \forall f\in C(M,K), 
$$
and also a fundamental symmetry of a Kre\u\i n C*-algebra because 
\begin{align*}
\|\phi_C(f)^{*_C}f\|_C&=\sup_{x\in M} \|(\gamma\circ f)^{*_C}(x)f(x)\|_K
=\sup_{x\in M} \|(\gamma\circ f)(x)^{*_K}f(x)\|_K\\
&=\sup_{x\in M} \|\gamma(f(x))^{*_K}f(x)\|_K=\sup_{x\in M} \|f(x)\|^2_K=\|f\|^2_C,
\end{align*}
for all $f\in C(M,K)$. 
\end{proof}

When we take $K:=\mathbb K$ in the previous theorem, we obtain another example of commutative symmetric imprimitivity 
Kre\u\i n C*-algebra. 

\begin{corollary}
The Kre\u\i n C*-algebra $C(M,\mathbb K)$ is a commutative symmetric imprimitivity unital Kre\u\i n C*-algebra. 
\end{corollary}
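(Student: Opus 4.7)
The plan is to inherit each of the required properties pointwise from the fiber $\mathbb K$. The preceding theorem already furnishes the unital Kre\u\i n C*-structure (with unit the constant function $x\mapsto 1_{\mathbb K}$ and fundamental symmetry $\phi_C(f):=\gamma\circ f$), so what remains is commutativity, the existence of an odd symmetry compatible with $\phi_C$, and fullness of the odd part as a bimodule over the even part. In each case the corresponding fiber-wise statement is already in hand: $\mathbb K$ is commutative by the theorem characterizing commutative Kre\u\i n C*-algebras (it is rank-one, so the even part is $\mathbb C\cdot 1_{\mathbb K}$ and the odd part is one-dimensional, hence automatically symmetric), and it carries the odd symmetry $\epsilon(T_{a,b}):=T_{b,a}$ exhibited in the proof of Theorem~\ref{tm2}.

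For commutativity I would simply observe that $(fg)(x)=f(x)g(x)=g(x)f(x)=(gf)(x)$ for every $x\in M$. For the odd symmetry I would lift $\epsilon$ fiberwise by setting $\epsilon_C(f):=\epsilon\circ f$, and check the four defining identities pointwise: involutivity, anti-$*$-linearity, the two-sided Leibniz rule $\epsilon_C(fg)=\epsilon_C(f)\,g=f\,\epsilon_C(g)$, and compatibility $\epsilon_C\circ\phi_C=-\phi_C\circ\epsilon_C$. The first three are immediate from the corresponding identities on $\mathbb K$; the compatibility amounts to the anticommutation $\epsilon\circ\gamma=-\gamma\circ\epsilon$ on $\mathbb K$, which I would verify by the direct matrix calculation $\epsilon(\gamma(T_{a,b}))=T_{-b,a}=-\gamma(\epsilon(T_{a,b}))$.

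For imprimitivity, the condition $\phi_C(f)=\pm f$ is equivalent to $f(x)\in\mathbb K_\pm$ for all $x$, so $C(M,\mathbb K)_\pm=C(M,\mathbb K_\pm)$. The associativity identity ${}_{A_+}\langle x\mid y\rangle z=x\,\langle y\mid z\rangle_{A_+}$ is automatic for the bimodule structure constructed in Theorem~\ref{tm1}, so only fullness must be established. The element $e:=T_{0,1}\in\mathbb K_-$ satisfies $e^{\dag_\gamma}=e$ and $e\cdot e=1_{\mathbb K}$, so the constant function $\mathbf e\colon x\mapsto e$ lies in $C(M,\mathbb K)_-$, and every $h\in C(M,\mathbb K)_+$ is realized as the single inner product $h=\langle\mathbf e\mid \tilde h\cdot\mathbf e\rangle_{C(M,\mathbb K)_+}$, where $\tilde h\in C(M,\mathbb C)$ corresponds to $h$ under the identification $\mathbb K_+\cong\mathbb C$; the analogous argument applies on the left. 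By the lemma on fullness recalled in the preliminaries, $C(M,\mathbb K)_-$ is then full over $C(M,\mathbb K)_+$. I do not foresee any serious obstacle; the one point that truly requires checking is the anticommutation $\epsilon\circ\gamma=-\gamma\circ\epsilon$ on $\mathbb K$, which is what singles out the correct odd symmetry among the available involutive swaps.
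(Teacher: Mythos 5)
Your proposal is correct and follows the paper's own route exactly: the paper's proof consists precisely of defining $\phi_C(f):=\gamma\circ f$ and $\epsilon_C(f):=\epsilon_{\mathbb{K}}\circ f$ and leaving the pointwise verifications (commutativity of the fiber $\mathbb{K}$, the anticommutation $\epsilon_{\mathbb{K}}\circ\gamma=-\gamma\circ\epsilon_{\mathbb{K}}$, and fullness via the constant section with value $T_{0,1}$) to the reader, all of which you carry out correctly.
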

\begin{proof}
Define the fundamental symmetry $\phi_C(f):=\gamma\circ f$ and $\epsilon_C(f):=\epsilon_{\mathbb K}\circ f$. 
\end{proof}

\begin{theorem}\label{tm4}
Let $(A,\alpha)$ be a (unital commutative) Kre\u\i n C*-algebra with a fundamental symmetry $\alpha$.
Let $I$ be a closed ideal in $A$ invariant under $\alpha$, i.e. $\alpha(I)\subseteq I$, then $A/I$ is also a (unital commutative) Kre\u\i n C*-algebra. 
\end{theorem}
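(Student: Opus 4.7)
The plan is to reduce the statement to the corresponding fact for ordinary C*-algebras via the associated-C*-algebra construction provided by Theorem \ref{tm6}, together with the converse construction observed in Remark \ref{re: c-k}. I will take for granted (as is standard in the Banach $*$-algebra setting) that the hypothesis ``closed ideal'' on $I$ tacitly includes $*$-invariance; combined with $\alpha(I)\subseteq I$ and the fact that $\alpha$ is a $*$-automorphism (so $*$ and $\alpha$ commute), this yields that $I$ is invariant under $\dag_\alpha = \alpha\circ *$ as well.

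First I would pass from the Kre\u\i n C*-algebra $(A,*)$ to its associated C*-algebra $(A,\dag_\alpha)$ given by Theorem \ref{tm6}. In this C*-algebra, $I$ is a closed two-sided $\dag_\alpha$-invariant ideal, so by the standard C*-algebra quotient theorem, $A/I$ inherits a C*-algebra structure with the quotient norm and the induced involution, which I still denote by $\dag_\alpha$. Simultaneously, since $\alpha(I)\subseteq I$, the map $\alpha$ descends to a well-defined map $\tilde\alpha\colon A/I\to A/I$, which is readily seen to be a unital $\dag_\alpha$-automorphism satisfying $\tilde\alpha\circ\tilde\alpha=\iota_{A/I}$.

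Next I would apply Remark \ref{re: c-k} to the C*-algebra $(A/I,\dag_\alpha)$ equipped with the involutive $\dag_\alpha$-automorphism $\tilde\alpha$: this immediately produces a Kre\u\i n C*-algebra structure on $A/I$ with fundamental symmetry $\tilde\alpha$ and involution $[x]\mapsto\tilde\alpha([x]^{\dag_\alpha})$. A short check shows that this induced involution coincides with the natural quotient of the original involution $*$ on $A$, since for $x\in A$ one has
\begin{equation*}
\tilde\alpha([x]^{\dag_\alpha}) = [\alpha(\alpha(x^*))] = [x^*],
\end{equation*}
so the Kre\u\i n C*-algebra structure on $A/I$ obtained this way is precisely the quotient of $(A,*)$. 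Commutativity (in the commutative case) descends trivially to the quotient.

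The main obstacle here is not really conceptual but rather notational: one must keep straight which of the two involutions ($*$ or $\dag_\alpha$) is being quotiented at each step, and confirm that the three objects --- the Kre\u\i n $*$-involution on $A/I$, the C*-involution $\dag_\alpha$ on $A/I$ inherited from $(A,\dag_\alpha)$, and the image of $\alpha$ on $A/I$ --- are mutually compatible under the equivalence provided by Theorem \ref{tm6} and Remark \ref{re: c-k}. Once this bookkeeping is in place, the Kre\u\i n C*-norm identity on $A/I$ is an automatic consequence of the C*-norm identity on $(A/I,\dag_\alpha)$ together with the defining relation between $*$, $\dag_\alpha$, and $\tilde\alpha$.
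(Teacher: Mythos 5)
Your proposal is correct and follows essentially the same route as the paper: quotient the associated C*-algebra $(A,\dag_\alpha)$ of Theorem~\ref{tm6} by the closed ideal $I$, observe that $\alpha$ descends to an involutive $\dag_\alpha$-automorphism of $A/I$, and then invoke Remark~\ref{re: c-k} to recover the Kre\u\i n involution $(x+I)^*=\alpha(x^{\dag_\alpha})+I=x^*+I$. Your extra care about $*$-invariance of $I$ is harmless but not needed, since a closed two-sided ideal of a C*-algebra is automatically self-adjoint.
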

\begin{proof}
Since $I$ is a closed ideal in the C*-algebra $(A,\dag_\alpha)$, the quotient $(A/I,\dag_\alpha)$ is a C*-algebra with involution 
$(x+I)^{\dag_\alpha}:=x^{\dag_\alpha}+I$. 
Since $I$ is invariant under the $*$-automorphism $\alpha$ we can define $[\alpha]:A/I\rightarrow A/I$ by $x+I\mapsto \alpha(x)+I$ for all $x\in A$ and $[\alpha]$ is a $\dag_\alpha$-automorphism of $(A/I,\dag_\alpha)$.  
By remark~\ref{re: c-k} $(A/I,*)$ is a Kre\u\i n C*-algebra with involution $(x+I)^*:=\alpha(x^{\dag_\alpha})+I=x^*+I$ and $[\alpha]$ becomes the fundamental symmetry on $(A/I,*)$. 
\end{proof}

\begin{theorem}\label{thm1}
Let $(A,\alpha)$ and $(B,\beta)$ be two Kre\u\i n C*-algebras with given fundamental symmetries $\alpha$ and $\beta$. 
A $*$-homomorphism $\phi:A\to B$ 
satisfying the property $\phi\circ\alpha=\beta\circ\phi$ is always continuous. Additionally, $\phi(A_+)\subseteq B_+$ and $\phi(A_-)\subseteq B_-$.
\end{theorem}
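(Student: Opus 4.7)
The plan is to reduce the statement to the well-known fact that every $*$-homomorphism between C*-algebras is automatically norm-continuous, by passing to the associated C*-algebra structures $(A,\dag_\alpha)$ and $(B,\dag_\beta)$ furnished by Theorem~\ref{tm6}.

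First I would verify that $\phi$ intertwines the two ``dagger'' involutions. Recall from Theorem~\ref{tm6} that $x^{\dag_\alpha}=\alpha(x^*)$ and $y^{\dag_\beta}=\beta(y^*)$. Using the hypothesis $\phi\circ\alpha=\beta\circ\phi$ together with the fact that $\phi$ is already a $*$-homomorphism for the original involutions, for any $x\in A$ I would compute
\begin{equation*}
\phi(x^{\dag_\alpha})=\phi(\alpha(x^*))=\beta(\phi(x^*))=\beta(\phi(x)^*)=\phi(x)^{\dag_\beta}.
\end{equation*}
Since $\phi$ is also multiplicative and $\CC$-linear, it is a $*$-homomorphism from the C*-algebra $(A,\dag_\alpha)$ to the C*-algebra $(B,\dag_\beta)$. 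The norms on $(A,\dag_\alpha)$ and $(B,\dag_\beta)$ coincide with the original Kre\u\i n C*-norms on $A$ and $B$ (this is precisely why Theorem~\ref{tm6} works), so the standard result that every $*$-homomorphism between C*-algebras is norm-decreasing, hence continuous, yields the desired continuity of $\phi$ as a map between the Kre\u\i n C*-algebras.

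For the second part I would simply unpack the compatibility $\phi\circ\alpha=\beta\circ\phi$. If $x\in A_+$, then $\alpha(x)=x$, so $\beta(\phi(x))=\phi(\alpha(x))=\phi(x)$, which shows $\phi(x)\in B_+$. If $x\in A_-$, then $\alpha(x)=-x$, so $\beta(\phi(x))=\phi(\alpha(x))=\phi(-x)=-\phi(x)$, giving $\phi(x)\in B_-$.

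There is no real obstacle here; the only potentially delicate point is the identification of the Kre\u\i n C*-norms with the C*-algebra norms of $(A,\dag_\alpha)$, $(B,\dag_\beta)$, but this is built into the statement of Theorem~\ref{tm6}. The whole argument is therefore routine once one recognizes that the intertwining hypothesis $\phi\circ\alpha=\beta\circ\phi$ is exactly what is needed to convert a Kre\u\i n $*$-homomorphism into a genuine C*-algebra $*$-homomorphism.
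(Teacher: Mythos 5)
Your proposal is correct and follows essentially the same route as the paper: the paper's (very terse) proof likewise observes that $\phi$ is a $\dag$-homomorphism between the associated C*-algebras $(A,\dag_\alpha)$ and $(B,\dag_\beta)$, hence automatically continuous, and deduces the inclusions $\phi(A_\pm)\subseteq B_\pm$ from the intertwining relation $\phi\circ\alpha=\beta\circ\phi$. You have merely written out the details that the paper leaves implicit.
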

\begin{proof}
Note that $\phi$ is a $\dag$-homomorphism between the associated C*-algebras $(A,\dag_\alpha)$ and $(B,\dag_\beta)$.  
The ``invariance'' of $\phi$ under $\alpha,\beta$ implies the last property. 
\end{proof}
If $\phi:A\to B$ is a unital $*$-homomorphism such that $\phi\circ\alpha=\beta\circ\phi$ for a given pair of fundamental symmetries of the Kre\u\i n 
C*-algebras $A$ and $B$, in view of the last property in theorem~\ref{thm1}, we will denote by $\phi_+:A_+\to B_+$ and $\phi_-:A_-\to B_-$ the restrictions to the even and odd parts of the Kre\u\i n C*-algebras. Note that $\phi=\phi_+\oplus \phi_-$. 
In particular, the quotient isomorphism $\pi:A\to A/I$ from a Kre\u\i n C*-algebra A to its quotient Kre\u\i n C*-algebra by an ideal $I$ that is invariant under a fundamental symmetry $\alpha$ of $A$, can be written as a direct sum $\pi=\pi_+\oplus \pi_-$ of the isomorphisms $\pi_+:A_+\to (A/I)_+$ and 
$\pi_-:A_-\to (A/I)_-$. 

\begin{corollary}\label{tm5}
Let $(A,\alpha)$ be a unital Kre\u\i n C*-algebra with a fundamental symmetry $\alpha$. Let $w$ be a unital $*$-homomorphism from $A$ into 
$\mathbb{K}$ with the property that $w\circ\alpha=\gamma\circ w $. Then $A/\ker(w)$ is a Kre\u\i n C*-algebra. 
\end{corollary}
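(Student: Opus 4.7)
The plan is to reduce this to a direct application of Theorem~\ref{tm4}, which already supplies the Kre\u\i n C*-algebra structure on quotients by $\alpha$-invariant closed ideals. So the task boils down to verifying three things about $\ker(w)$: that it is a two-sided ideal, that it is closed, and that it is invariant under the given fundamental symmetry $\alpha$.

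The ideal property is immediate from $w$ being an algebra homomorphism. For closedness, I would invoke Theorem~\ref{thm1}: since $w:A\to\mathbb{K}$ is a unital $*$-homomorphism between Kre\u\i n C*-algebras $(A,\alpha)$ and $(\mathbb{K},\gamma)$ with the compatibility $w\circ\alpha=\gamma\circ w$, Theorem~\ref{thm1} guarantees that $w$ is norm-continuous. Hence $\ker(w)=w^{-1}(\{0\})$ is closed in $A$.

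For the $\alpha$-invariance, I would compute directly: for any $x\in\ker(w)$,
\begin{equation*}
w(\alpha(x))=\gamma(w(x))=\gamma(0)=0,
\end{equation*}
using the compatibility hypothesis $w\circ\alpha=\gamma\circ w$ and the fact that $\gamma$ is linear. Therefore $\alpha(x)\in\ker(w)$, proving that $\alpha(\ker(w))\subseteq\ker(w)$.

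Having established that $\ker(w)$ is a closed two-sided ideal invariant under the fundamental symmetry $\alpha$, Theorem~\ref{tm4} applies verbatim and delivers the conclusion that $(A/\ker(w),*)$ is a Kre\u\i n C*-algebra, with induced involution $(x+\ker(w))^*:=x^*+\ker(w)$ and induced fundamental symmetry $[\alpha]$ given by $x+\ker(w)\mapsto \alpha(x)+\ker(w)$. There is no real obstacle here; the only point worth noting is that the assumption $w\circ\alpha=\gamma\circ w$ plays a double role, both in securing continuity of $w$ (via Theorem~\ref{thm1}) and in securing $\alpha$-invariance of the kernel, which is exactly why this hypothesis is the one that makes the result work.
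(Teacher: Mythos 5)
Your proof is correct and follows exactly the paper's own route: continuity of $w$ via Theorem~\ref{thm1} gives closedness of the kernel, the direct computation $w(\alpha(x))=\gamma(w(x))=0$ gives $\alpha$-invariance, and Theorem~\ref{tm4} finishes. You merely spell out the "direct computation" that the paper leaves implicit.
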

\begin{proof}
By the preceding theorm, $\ker(w)$ is a closed ideal in $A$. A direct computation shows that $\alpha(\ker(w))\subseteq \ker(w)$. 
Consequently, $A/\ker(w)$ is a Kre\u\i n C*-algebra by theorem~\ref{tm4}.
\end{proof}

\begin{corollary}\label{cor1}
Let $(A,\alpha)$ be a unital imprimitive Kre\u\i n C*-algebra with a fundamental symmetry $\alpha$. 
Let $w$ be a unital $*$-homomorphism from $A$ into $\mathbb{K}$ with the property that $w\circ\alpha=\gamma\circ w$. 
Then $A/\ker(w) \cong \mathbb{K}$. 
\end{corollary}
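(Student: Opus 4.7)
The plan is to apply Corollary~\ref{tm5} to conclude that $A/\ker(w)$ is a unital Kre\u\i n C*-algebra with fundamental symmetry $[\alpha]$, and then to show it is rank-one so that Theorem~\ref{tm3} yields the desired isomorphism with $\mathbb K$.

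First I would factor $w$ through the quotient as $w=\tilde w\circ\pi$, obtaining a canonical injective unital $*$-homomorphism $\tilde w:A/\ker(w)\to\mathbb K$. It still intertwines the fundamental symmetries, $\tilde w\circ[\alpha]=\gamma\circ\tilde w$, as this passes through the commuting quotient diagram. By the decomposition remark following Theorem~\ref{thm1}, $\tilde w$ splits as $\tilde w_+\oplus\tilde w_-$ with $\tilde w_\pm$ injective into the one-dimensional spaces $\mathbb K_\pm$, so $\dim(A/\ker(w))_\pm\leq 1$. For the even part, $\tilde w_+$ is a nonzero unital $*$-homomorphism from the commutative C*-algebra $(A/\ker(w))_+\cong A_+/(A_+\cap\ker(w))$ into $\mathbb C$, and hence $\dim(A/\ker(w))_+=1$.

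The main content is to show that the odd part is also one-dimensional, equivalently that $\tilde w_-\neq 0$; here the imprimitivity hypothesis is essential. Suppose, for contradiction, that $w(A_-)=\{0\}$. Then for every $x,y\in A_-$, using that $w$ is a $*$-homomorphism intertwining $\alpha$ and $\gamma$,
\[
w\bigl({}_{A_+}\langle x\mid y\rangle\bigr)=w(xy^{\dag_\alpha})=w(x)\,\gamma(w(y))^{*}=0.
\]
Since $A$ is imprimitive, by the lemma on full Hilbert C*-modules over unital C*-algebras, the (unclosed) span of these left inner products already equals all of $A_+$, so $w_+\equiv 0$ on $A_+$, contradicting $w(1_A)=1_{\mathbb K}$. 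Hence $\tilde w_-$ is a nonzero injection into the one-dimensional space $\mathbb K_-$, giving $\dim(A/\ker(w))_-=1$.

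Combining these, $A/\ker(w)$ is a rank-one unital Kre\u\i n C*-algebra, and Theorem~\ref{tm3} yields $A/\ker(w)\cong\mathbb K$. The main conceptual obstacle is the odd-part argument: without the imprimitivity hypothesis one cannot rule out $w_-\equiv 0$ while $w_+$ remains non-trivial, and the conclusion would fail; all the remaining steps are routine consequences of Corollary~\ref{tm5}, the decomposition $A=A_+\oplus A_-$, and the Gel'fand--Mazur-style Theorem~\ref{tm3}.
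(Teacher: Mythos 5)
Your proof is correct and follows essentially the same route as the paper's: both arguments hinge on the fullness lemma (the unclosed span of products of odd elements already equals $A_+$ over a unital C*-algebra) to rule out $w|_{A_-}=0$, together with Gel'fand--Mazur on the even part. The only cosmetic difference is that the paper concludes directly that the induced map $\beta:A/\ker(w)\to\mathbb{K}$, $x+\ker(w)\mapsto w(x)$, is surjective because $w(A_\pm)=\mathbb{K}_\pm$, whereas you take a small detour by first establishing that the quotient is rank-one and then invoking Theorem~\ref{tm3}; your dimension count already makes $\tilde w$ a bijective $*$-homomorphism, so that final appeal is not actually needed.
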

\begin{proof}
$A/\ker(w)$ is a Kre\u\i n C*-algebra by Corollary~\ref{tm5}. 

Define $\beta:A/\ker(w)\rightarrow \mathbb{K}$ by $x+\ker(w)\mapsto w(x)$ for all $x\in A$. It is easily checked that $\beta$ is unital injective 
$*$-homomorphism. It remains to show the surjective property. Since ${A_-}_{A_+}$ is full, WLOG 
$1=\sum_{j=1}^n x_j y_j$ for some $n\in \mathbb{N}$ and $x_j, y_j\in A_-$ for all $i=1,\ldots, n$. 
Suppose that $w(x)=0$ for all $x\in A_-$. Since $w$ is a unital homomorphism, we have 
$$
1=w(1)=w(\sum_{j=1}^n x_j y_j)=\sum_{j=1}^n w(x_j)w(y_j)=0,
$$
which leads to contradiction. Thus there is element $x$ in $A_-$ such that $w(x)\neq 0$. Since $w(A_-)\subseteq \mathbb{K}_-$ and $\dim{\mathbb{K}_-}=1$, we get $w(A_-)=\mathbb{K}_-$. Similarly, by Gel'fand-Mazur theorem 
$w(A_+)=\mathbb{K}_+$. Thus we already have
$$
\beta(A/\ker(w))=w(A)=w(A_+\oplus A_-)=w(A_+)\oplus w(A_-)=\mathbb{K}_+\oplus \mathbb{K}_-=\mathbb{K},
$$
and so $\beta$ is clearly surjective.
\end{proof}

\begin{definition}
A \textbf{character} on a unital (commutative symmetric full) Kre\u\i n C*-algebra $A$ is a unital $*$-homomorphism 
$w:A\rightarrow \mathbb{\mathbb{K}}$ such that there exists at least one fundamental symmetry $\alpha$ of the Kre\u\i n C*-algebra $A$ satisfying the property $w\circ\alpha=\gamma\circ w$. In this case we will say that $\alpha$ and $w$ are \emph{compatible}. 
\end{definition}
We denote by $\Omega(A)$ the set of characters on $A$, and by $\Omega(A,\alpha)$ the set of characters compatible with a given fundamental symmetry $\alpha$ that is
\begin{align*}
\Omega(A,\alpha)=\{w\mid w:A\rightarrow \mathbb{K} \text{ unital $*$-homomorphism compatible with $\alpha$}\}. 
\end{align*}
For each $a\in A$, define $\hat{a}:\Omega(A)\rightarrow \mathbb{K}$ by $\hat{a}(w)=w(a)$ for all $w\in \Omega(A)$.
Equip $\Omega(A)$ with the smallest topology which makes each $\hat{a}$ continuous. 

Note that $\Omega(A,\alpha)$ with the induced subspace topology is clearly a closed set in the topological space $\Omega(A)$ since 
\hbox{$\Omega(A,\alpha)=\{w\in \Omega(A) \mid \widehat{\alpha(x)}(w)=\gamma\circ\hat{x}(w), \forall x\in A\}$} and the functions 
$ \widehat{\alpha(x)}, \gamma\circ\hat{x}$ are continuous on $\Omega(A)$. 

We next define an equivalence relation between the characters as follows:
\begin{equation*}
w_1\sim w_2 \Longleftrightarrow w_2=w_1\circ \phi \text{ for some } \phi \text{ a unital $*$-automorphism on } \mathbb{K}.
\end{equation*}
By Theorem \ref{tm7}, $[w]=\{w, \gamma\circ w\}$. Note that $w\in\Omega(A,\alpha)$ implies $[w]\subset\Omega(A,\alpha)$. 
\begin{equation*}
\text{We define} \quad 
\Omega_b(A)=\{[w]\mid w\in \Omega(A)\}, \quad \Omega_b(A,\alpha)=\{[w]\mid w\in \Omega(A,\alpha)\}. 
\end{equation*}
Equip $\Omega_b(A)$ with the quotient topology induced by the quotient map 
$$\mu:\Omega(A)\rightarrow \Omega_b(A) \ \text{given by }\  \mu: w\mapsto [w].$$
Note that on $\Omega_b(A,\alpha)$ the quotient topology induced by $\Omega(A,\alpha)$ coincides with the subspace topology induced by 
$\Omega_b(A)$. 

\begin{lemma}\label{lem: comp}
$\Omega(A)$ and $\Omega(A,\alpha)$ are compact Hausdorff spaces. 
\end{lemma}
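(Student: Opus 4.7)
The plan is to carry out the standard Gel'fand compactness argument, embedding $\Omega(A)$ into a compact Hausdorff product space and showing the image is closed. The only new ingredient compared to the classical case is that characters take values in the two-dimensional C*-algebra $\mathbb{K}$ rather than in $\mathbb{C}$.

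First I would establish a uniform norm bound on characters. If $w\in\Omega(A)$ is compatible with a fundamental symmetry $\alpha$, then by Theorem~\ref{tm6} both $(A,\dag_\alpha)$ and $(\mathbb{K},\dag_\gamma)$ are C*-algebras, and the identity $w\circ\alpha=\gamma\circ w$ is precisely the statement that $w$ is a unital $\dag$-homomorphism between these two C*-algebras. Hence $w$ is norm-decreasing: $\|w(a)\|_{\mathbb{K}}\leq\|a\|_A$ for every $a\in A$.

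Next I would introduce the product space
$$
P:=\prod_{a\in A}\overline{B}_{\|a\|_A}(0)\subseteq \mathbb{K}^A.
$$
Since $\mathbb{K}$ is a finite-dimensional normed space, each factor is a closed bounded subset and hence compact Hausdorff, so $P$ is compact Hausdorff by Tychonoff. Evaluation $\operatorname{ev}\colon\Omega(A)\to P$, $w\mapsto (w(a))_{a\in A}$, is well defined by the norm bound, injective, and a topological embedding because the initial topology on $\Omega(A)$ generated by the $\hat{a}$ matches the subspace topology pulled back from the product topology.

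The crux is then to verify that $\operatorname{ev}(\Omega(A))$ is closed in $P$. If a net $(w_\lambda)$ in $\Omega(A)$ converges pointwise to some $w\in P$, then the linearity, multiplicativity, involution-preservation, and unitality conditions are all expressed by equations in finitely many coordinates and therefore persist in the limit, showing that $w$ is a unital $*$-homomorphism. For the compatibility condition I would first handle the fixed-symmetry case: for any given $\alpha$ the equation $w\circ\alpha=\gamma\circ w$ is closed under pointwise limits, so $\operatorname{ev}(\Omega(A,\alpha))$ is closed in $P$ and hence compact. Hausdorffness is immediate from the Hausdorffness of $\mathbb{K}$, since distinct characters are separated by some $\hat{a}$. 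Finally, by the remark preceding the lemma, $\Omega(A,\alpha)$ is closed in $\Omega(A)$, so once $\Omega(A)$ itself is shown compact, $\Omega(A,\alpha)$ inherits compactness Hausdorffness.

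The main obstacle is the closedness of $\Omega(A)$ (as opposed to $\Omega(A,\alpha)$) in $P$: a pointwise limit of characters $w_\lambda$ compatible with different fundamental symmetries $\alpha_\lambda$ need not obviously be compatible with any single fundamental symmetry. I would address this by carrying out the argument rigorously first on the slices $\Omega(A,\alpha)$, which is sufficient for the spectral theorem, and then exploiting Theorem~\ref{tm7}-type rigidity (the only nontrivial $*$-automorphism of $\mathbb{K}$ is $\gamma$) to deduce that the family of fundamental symmetries acting compatibly is rigid enough for $\bigcup_\alpha\Omega(A,\alpha)$ to remain closed in $P$.
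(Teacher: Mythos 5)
Your proposal follows the same route as the paper's proof: the uniform bound $\|w(a)\|_{\mathbb K}\leq\|a\|_A$ obtained by viewing a compatible character as a unital $\dag$-homomorphism between the associated C*-algebras of Theorem~\ref{tm6}, the embedding of the character space into the compact Hausdorff product $\prod_{a\in A}\overline{B(0_{\mathbb K},\|a\|)}$, closedness of the defining equations under pointwise limits, and separation of distinct characters by some $\hat a$. For the fixed-symmetry space $\Omega(A,\alpha)$ your argument is complete and coincides in substance with the paper's (the paper proves compactness of $\Omega(A)$ first and then passes to $\Omega(A,\alpha)$ as a closed subspace; running the limit argument directly on the slice, as you do, is equally valid and in fact cleaner).

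The substantive issue is the closedness of $\Omega(A)$ itself in the product, which you correctly isolate as the main obstacle: a character is only required to be compatible with \emph{some} fundamental symmetry, so a pointwise limit of characters $w_\lambda$ compatible with varying $\alpha_\lambda$ is a unital $*$-homomorphism but is not visibly compatible with any single $\alpha$. The paper's proof does not engage with this at all --- it declares closedness of $\Omega(A)$ ``a standard matter'' --- so you have identified a gap in the published argument rather than introduced one. However, your proposed repair is not yet a proof: Theorem~\ref{tm7} constrains the automorphisms of the target $\mathbb K$, not the supply of fundamental symmetries of $A$, and by itself it does not show that $\bigcup_\alpha\Omega(A,\alpha)$ is closed in $P$. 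To make the statement airtight you should either restrict the claim to $\Omega(A,\alpha)$ --- which is all that the sequel uses, since the Gel'fand transform and the spectral theorem are formulated on $\Omega_b(A,\alpha)$ --- or invoke the uniqueness of the fundamental symmetry in the commutative full symmetric case (asserted at the end of the paper), under which $\Omega(A)=\Omega(A,\alpha)$ and the difficulty evaporates.
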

\begin{proof}
Since a character $w:A\to\mathbb K$ is a unital $*$-homomorphism of the associated \hbox{C*-algebras} $(A,\dag_\alpha)$ and 
$(\mathbb K,\dag_\gamma)$, with the same techniques used in Banach-Alaoglu theorem, it is a standard matter to check that 
$\Omega(A)$ is a closed subset of compact set $\prod_{x\in A} \overline{B(0_\mathbb{K}, \|x\|)}$ and hence 
$\Omega(A)$ is also a compact set. To show that $\Omega(A)$ is a Hausdorff space, we first let $w_1, w_2$ be characters such that $w_1\neq w_2$. Then there is $a\in A$ such that $w_1(a)\neq w_2(a)$ that is $\hat{a}(w_1)\neq\hat{a}(w_2)$. Since $\hat{a}$ is continuous on $\Omega(A)$ and 
$\mathbb{K}$ is Hausdorff, we obtain that $\Omega(A)$ is also Hausdorff. 
Since $\Omega(A,\alpha)$ is a closed subspace of $\Omega(A)$, the result follows.
\end{proof}

\begin{lemma}\label{lem2} Let $w, w_1$ and $w_2$ be characters. The following properties hold.
\begin{enumerate}
	\item[a)] If $w_1\sim w_2$, then $\ker(w_1)=\ker(w_2)$.
	\item[b)] If $w\in \Omega(A,\alpha)$, $w(x)=0\Longleftrightarrow w(x^{\dag_\alpha}x)=0$, for all $x\in A$.
	\item[c)] If $w_1,w_2\in \Omega(A,\alpha)$ and ${w_1}_+={w_2}_+$, then $\ker(w_1)=\ker(w_2)$.
\end{enumerate}
\end{lemma}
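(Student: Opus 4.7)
The plan is to handle the three parts in order, with (b) serving as the main ingredient for (c). Part (a) is essentially immediate: the equivalence $w_1\sim w_2$ provides a $*$-automorphism $\phi$ of $\mathbb{K}$ relating them (read as $w_2=\phi\circ w_1$, which is what makes sense dimensionally and is consistent with the paper's subsequent use $[w]=\{w,\gamma\circ w\}$). Since $\phi$ is in particular injective, $\ker(w_2)=w_1^{-1}(\phi^{-1}(\{0\}))=\ker(w_1)$.

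For part (b), the forward direction follows from multiplicativity of $w$: if $w(x)=0$ then $w(x^{\dag_\alpha}x)=w(x^{\dag_\alpha})w(x)=0$. For the converse, I would pass to the associated C*-algebras via Theorem~\ref{tm6}: both $(A,\dag_\alpha)$ and $(\mathbb{K},\dag_\gamma)$ are genuine C*-algebras, and the compatibility $w\circ\alpha=\gamma\circ w$ combined with $w(x^*)=w(x)^*$ yields $w(x^{\dag_\alpha})=w(x)^{\dag_\gamma}$, so $w$ is a $\dag$-homomorphism between these C*-algebras. The C*-identity in $(\mathbb{K},\dag_\gamma)$ then gives $\|w(x)\|^2=\|w(x)^{\dag_\gamma}w(x)\|=\|w(x^{\dag_\alpha}x)\|$, so the vanishing of the right-hand side forces $w(x)=0$.

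For part (c), I would decompose $x=x_++x_-$ and exploit that any character in $\Omega(A,\alpha)$ sends $A_\pm$ into $\mathbb{K}_\pm$ (Theorem~\ref{thm1} applied to $w_i$ intertwining $\alpha$ and $\gamma$). The direct-sum decomposition $\mathbb{K}=\mathbb{K}_+\oplus\mathbb{K}_-$ then gives $w_i(x)=0\iff w_i(x_+)=0$ and $w_i(x_-)=0$. The even-part equivalence is immediate from $(w_1)_+=(w_2)_+$. For the odd part, I would apply (b) to the element $x_-$: since $x_-^*\in A_-$ and $A_-\cdot A_-\subseteq A_+$, the element $x_-^{\dag_\alpha}x_-=-x_-^*x_-$ lies in $A_+$, so $w_1$ and $w_2$ take the same value on it by the hypothesis $(w_1)_+=(w_2)_+$; by part (b), this transfers to the equivalence $w_1(x_-)=0\iff w_2(x_-)=0$, and combining the two components yields $\ker(w_1)=\ker(w_2)$.

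The main obstacle is the reduction in (c): one must recognize that passing through the $\dag_\alpha$-involution is essential, since $(\mathbb{K},*)$ is only a Kre\u\i n (not a genuine) C*-algebra and $y^*y=0$ does not in general force $y=0$ there (for instance $y=I+e$ satisfies $y^*y=0$). It is precisely the C*-axioms on $(A,\dag_\alpha)$ and $(\mathbb{K},\dag_\gamma)$, captured abstractly by (b), that allow the odd-part question to be reduced to an assertion about an element of $A_+$ where the hypothesis on ${w_i}_+$ can be used.
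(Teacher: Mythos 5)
Your proof is correct and follows essentially the same route as the paper's: part (b) is the C*-identity computation $\|w(x)\|^2=\|\gamma(w(x))^*w(x)\|=\|w(x^{\dag_\alpha}x)\|$ in the associated C*-algebras, and part (c) reduces the odd component to the even element $x_-^{\dag_\alpha}x_-\in A_+$ via (b). Your explicit decomposition $x=x_++x_-$ in (c) and the reading $w_2=\phi\circ w_1$ in (a) merely spell out steps the paper leaves implicit.
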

\begin{proof}Property a) is clear. 
For b), assume that $w(x^{\dag_\alpha}x)=0$. 
Then
$$
\|w(x)\|^2=\|\gamma(w(x))^*w(x)\|=\|w(\alpha(x))^*w(x)\|=\|w(\alpha(x^*)x)\|=\|w(x^{\dag_\alpha}x)\|=0,
$$
and so $w(x)=0$.

For c), by the assumption, we have $\ker({w_1}_+)=\ker({w_2}_+)$. 
Let $x\in A_-$ be such that $w_1(x)=0$. By b), $w_1(\alpha(x^*)x)=0$ and also $w_2(\alpha(x^*)x)=0$ because 
$\ker({w_1}_+)=\ker({w_2}_+)$. Again by b), $w_2(x)=0$ and hence $\ker({w_1}_-)\subseteq\ker({w_2}_-)$. 
By the same argument, it is elementary to verify the inverse inclusion. 
\end{proof}

\begin{lemma}\label{lem1}
Let $(A,\alpha)$ be a Kre\u\i n C*-algebra equipped with a fundamental symmetry $\alpha$ and let $\omega\in \Omega(A_+)$ be a character defined on the even part of the Kre\u\i n C*-algebra $A=A_+\oplus A_-$.  
Define  
  $$I_+:=\ker(w), \quad I_-:=A_-\ker(w):=\Span\{xa \mid x\in A_-, \ a\in \ker{w}\}, \quad I:=I_+\oplus I_-.$$ 
  Then $I$ is an ideal in $A$ invariant under $\alpha$ and the following properties hold
	\begin{enumerate}
		\item[a)] $A_+/I_+$ is a C*-algebra with dimension 1.
		\item[b)] $A_-/I_-$ is a Hilbert C*-bimodule over $A_+/I_+$.
		\item[c)] $A/I=(A/I)_+\oplus (A/I_-)\cong A_+/I_+\oplus A_-/I_-$.
	 	\item[d)] $A/I$ is a rank-one Kre\u\i n C*-algebra.
	\end{enumerate}
\end{lemma}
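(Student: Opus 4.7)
The plan is to verify the five assertions in sequence, relying on Theorems~\ref{tm1} and~\ref{tm4}, together with Proposition~2.6 of~\cite{BCL} quoted in the preliminaries. First, I would check that $I$ is an $\alpha$-invariant two-sided ideal. Invariance under $\alpha$ is immediate: $\alpha$ restricts to $+\id$ on $I_+\subseteq A_+$ and to $-\id$ on $I_-\subseteq A_-$. For the ideal property, I decompose a generic $a=a_++a_-\in A$ together with a typical $b_++\sum_i x_i c_i\in I$ (with $b_+\in\ker\omega$, $x_i\in A_-$, $c_i\in\ker\omega$) and check the four resulting cross terms. The two terms with a $\ker\omega$ factor on the right, $a_+b_+$ and $a_-(\sum_i x_i c_i)$, land in $I_+=\ker\omega$: in the second case one uses $a_-x_i\in A_+$ and $\omega(a_-x_ic_i)=\omega(a_-x_i)\omega(c_i)=0$. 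The remaining two terms $a_+(\sum_i x_ic_i)$ and $a_-b_+$ land in $I_-$ by associativity together with $A_+\cdot A_-\subseteq A_-$.

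Part~(a) is the first isomorphism theorem applied to $\omega:A_+\to\CC$, giving $A_+/I_+\cong\CC$. For part~(b), I would observe that $I_+$ is a closed $*$-ideal of the commutative C*-algebra $A_+$ (characters being continuous $*$-homomorphisms), and that $I_-$ is simultaneously a left and a right sub-bimodule of $A_-$ over $A_+$; for the left side, $A_+\cdot(A_-I_+)\subseteq A_-I_+$ because $A_+A_-\subseteq A_-$. The bimodule inner products~\eqref{eqip} then descend to the quotient: whenever $x-x'\in I_-$, the element $(x-x')^{\dag_\alpha}y$ lies in $I_+$, since $\ker\omega$ is $\dag_\alpha$-closed (on $A_+$, $\dag_\alpha$ coincides with $*$). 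Proposition~2.6 of~\cite{BCL}, applied to both the left and right structures, identifies $A_-/I_-$ as a Hilbert C*-bimodule over $A_+/I_+$. Part~(c) is immediate from $A=A_+\oplus A_-$ and $I=I_+\oplus I_-$ with $I_\pm\subseteq A_\pm$: these give $A/I\cong A_+/I_+\oplus A_-/I_-$ as vector spaces, and the $\alpha$-invariance of $I$ (via Theorem~\ref{tm4}) guarantees that the induced fundamental symmetry $[\alpha]$ on $A/I$ has $\pm 1$ eigenspaces equal to $A_\pm/I_\pm$.

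The substantive step is~(d). Theorem~\ref{tm4} already provides that $A/I$ is a Kre\u\i n C*-algebra with fundamental symmetry $[\alpha]$, and part~(a) yields $\dim(A/I)_+=1$; the principal obstacle is showing $\dim(A/I)_-=1$. Under the running assumptions of the paper (commutativity of $A_+$ and imprimitivity of $A_-$ over $A_+$), $A_-$ is a symmetric imprimitivity bimodule over $A_+$, and fullness is inherited by the quotient since the image of a spanning family spans, so that $A_-/I_-$ becomes a symmetric imprimitivity bimodule over $A_+/I_+\cong\CC$. The imprimitivity identity ${}_{\CC}\langle x\mid y\rangle\, z=x\,\langle y\mid z\rangle_{\CC}$, combined with the symmetric property, specialised at $z=y\neq 0$ yields $x=\bigl(\langle y\mid x\rangle_\CC/\langle y\mid y\rangle_\CC\bigr)\, y$ for every $x\in A_-/I_-$, forcing $\dim(A_-/I_-)=1$ and completing the proof that $A/I$ is rank-one.
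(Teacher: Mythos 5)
Your proposal is correct and follows essentially the same route as the paper: quotient structure via Theorem~\ref{tm4} and Proposition~2.6 of~\cite{BCL}, Gel'fand--Mazur for $A_+/I_+\cong\CC$, and the imprimitivity identity over $\CC$ to force $\dim(A_-/I_-)=1$ (the paper phrases this last step as a contradiction from an orthonormal pair, you phrase it as every element being a scalar multiple of a fixed nonzero one --- the same computation). You are in fact more explicit than the paper about the ideal verification and about fullness descending to the quotient, which the paper leaves implicit.
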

\begin{proof}
The bimodule $A_-/I_-$ over $A_+/I_+$ is a Hilbert C*-bimodule with the inner products defined as in the proof of theorem~\ref{tm1} using the 
$(x+I_-)^{\dag_\alpha}:=x^{\dag_\alpha}+I_-$ involution. 
The only other thing that is not completely straightforward is that $A/I$ is rank 1. By b), since by Gel'fand-Mazur 
$A_+/I_+\cong \mathbb C$, $A_-/I_-$ is a Hilbert space over $A_+/I_+$. To show the rank-one property, suppose by contradiction that 
$x,y\in A_-/I_-$ is a pair of orthonormal vectors, then 
$$
y=\langle x|x\rangle y+\langle y|x\rangle y=\langle x+y|x\rangle y=(x+y)\langle x|y \rangle=0,
$$
which is impossible. 
\end{proof}

\begin{theorem}
If $(A,\alpha)$ is a unital commutative imprimitive Kre\u\i n C*-algebra with fundamental symmetry $\alpha$, then 
$\Omega_b(A,\alpha)$ is a compact Hausdorff space.
\end{theorem}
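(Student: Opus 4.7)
The plan is to view $\Omega_b(A,\alpha)$ as the orbit space of a continuous $\mathbb Z_2$-action on the compact Hausdorff space $\Omega(A,\alpha)$, and derive the two topological properties separately.

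For compactness, $\Omega(A,\alpha)$ is already known to be compact Hausdorff by Lemma~\ref{lem: comp}. As noted in the excerpt, the quotient topology on $\Omega_b(A,\alpha)$ induced from $\Omega(A,\alpha)$ coincides with the subspace topology from $\Omega_b(A)$, so $\Omega_b(A,\alpha) = \mu(\Omega(A,\alpha))$ with $\mu$ continuous and surjective, and compactness is immediate.

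The key step is to identify the equivalence relation with a finite-group action. By Theorem~\ref{tm7}, the only unital $*$-automorphisms of $\mathbb K$ are $\iota_{\mathbb K}$ and $\gamma$, so the relation $\sim$ restricted to $\Omega(A,\alpha)$ is generated by the map $\sigma : \Omega(A,\alpha) \to \Omega(A,\alpha)$ defined by $\sigma(w) := \gamma \circ w$. I would verify three routine facts: (i) $\sigma$ actually sends $\Omega(A,\alpha)$ into itself, since $w \circ \alpha = \gamma \circ w$ and $\gamma^2 = \iota_{\mathbb K}$ give $(\sigma w) \circ \alpha = \gamma \circ w \circ \alpha = w = \gamma \circ (\sigma w)$; (ii) $\sigma^2 = \mathrm{id}$; (iii) $\sigma$ is continuous, because for each $a \in A$ one has $\hat a \circ \sigma = \gamma \circ \hat a$, and $\gamma$ is continuous on $\mathbb K$. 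Thus $\sigma$ is a homeomorphic involution and $\Omega_b(A,\alpha) = \Omega(A,\alpha)/\langle\sigma\rangle$.

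For the Hausdorff property, I would apply the classical separation argument for orbit spaces of finite groups of homeomorphisms on compact Hausdorff spaces. Given $[w_1] \neq [w_2]$, the orbits $\{w_i, \sigma(w_i)\}$ are disjoint finite subsets of $\Omega(A,\alpha)$, so by normality one can find disjoint open sets $U_1, U_2$ containing them. The essential technical point is to make these sets saturated: replacing $U_i$ with $V_i := U_i \cap \sigma(U_i)$ yields open sets (since $\sigma$ is a homeomorphism) that are still disjoint, still contain the respective orbits, and satisfy $\sigma(V_i) = V_i$; hence $\mu^{-1}(\mu(V_i)) = V_i$, so $\mu(V_1)$ and $\mu(V_2)$ are disjoint open neighborhoods of $[w_1]$ and $[w_2]$ in $\Omega_b(A,\alpha)$.

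The main obstacle is really just this last symmetrization step — the trick of intersecting with the $\sigma$-translate to get saturated neighborhoods — since everything else follows from already-established facts (Lemma~\ref{lem: comp}, Theorem~\ref{tm7}, and the definition of the topology on $\Omega(A)$). The commutativity and imprimitivity hypotheses are not actually needed for this topological statement; they will play their role only in the subsequent identification of $\Omega_b(A,\alpha)$ with the spectrum appearing in the Gel'fand-type theorem.
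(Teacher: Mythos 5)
Your proof is correct, but it takes a genuinely different route from the paper's. You treat $\Omega_b(A,\alpha)$ as the orbit space of the continuous involution $\sigma(w)=\gamma\circ w$ on the compact Hausdorff space $\Omega(A,\alpha)$ and run the standard saturation argument ($V_i:=U_i\cap\sigma(U_i)$) to separate orbits; all three of your preliminary checks on $\sigma$ are sound, and the separation works whether an orbit has one or two points. The paper instead proves Hausdorffness by exhibiting the restriction map $\phi:\Omega_b(A,\alpha)\to\Omega(A_+)$, $[w]\mapsto w_+$, showing it is a continuous bijection (injectivity via Lemma~\ref{lem2}(c) and Corollary~\ref{cor1}, surjectivity via the quotient construction of Lemma~\ref{lem1}), and pulling Hausdorffness back from $\Omega(A_+)$; since the domain is compact, $\phi$ is in fact a homeomorphism. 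Your observation that commutativity and imprimitivity are not needed for the bare topological statement is accurate and is a genuine gain in generality of your argument. What the paper's heavier proof buys is precisely the identification $\Omega_b(A,\alpha)\cong\Omega(A_+)$, which does use imprimitivity and is then relied upon in the proof of the spectral theorem (Theorem~\ref{thm2}), where the Gel'fand transform on the even part is matched with the classical Gel'fand isomorphism for $A_+$. So if you adopt your proof, that homeomorphism still has to be established separately before Theorem~\ref{thm2}.
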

\begin{proof}
Since $\Omega(A,\alpha)$ is compact by lemma~\ref{lem: comp} and $\mu$ is continuous, $\Omega_b(A,\alpha)$ is also compact.

We now consider the map $\phi$ from $\Omega_b(A,\alpha)$ to $\Omega(A_+)$ defined by $[w]\mapsto w_+$. If we can show that this map is a homeomorphism, we can conclude that $\Omega_b(A,\alpha)$ is a Hausdorff space since $\Omega(A_+)$ is a Hausdorff space by the spectral theorem for unital commutative C*-algebras. It is sufficient to show that $\phi$ is a continuous bijective map because $\Omega_b(A,\alpha)$ is a compact space and $\Omega(A)$ is a Hausdorff space. 

Firstly, $\phi$ is well-defined. More precisely, let $[w_1]=[w_2]$, if $w_1\neq w_2$, then $w_2=\gamma\circ w_1$. Let $x\in A_+$,  then 
$
w_1(x)=w_1(\alpha(x))=\gamma(w_1(x))=w_2(x)
$, 
that is, ${w_1}_+={w_2}_+$.

We next show that $R:=\phi\circ\mu$ is a continuous map. Note that, by definition, we have $R(w)=w_+$.  For easier consideration, we provide a diagram of all the functions involved,
\[
\xymatrix{
& \Omega(A,\alpha) \ar[ld]_\mu \ar[rd]^{\hat{a}} \ar[d]_R& 
\\ 
 \Omega_b(A,\alpha)\ar[r]_\phi & \Omega(A_+)  \ar[r]_{\hat{a}_+} & \mathbb C
}
\]
since for all $a\in A_+$, $\hat{a}=\hat{a}_+\circ R$ is continuous on $\Omega(A,\alpha)$, $R$ is also continuous. 
Since $R$ is a continuous map, by the properties of quotient topology we also have that $\phi$ is a continuous map. 
Next, we will show that $\phi$ is a bijection.

To show that $\phi$ is injective, we suppose that $w_1, w_2$ are characters on $(A,\alpha)$ such that ${w_1}_+={w_2}_+$. 
We next examine the diagram here 
\begin{equation*}
\xymatrix{
A \ar[r]^{w_1} \ar[d]_{\pi_1} & \KK & A \ar[l]_{w_2} \ar[d]^{\pi_2}
\\
A/\ker(w_1)\cong \mathbb K \ar[ur]_{\beta_1} & & A/\ker(w_2)\cong \mathbb K. \ar[ul]^{\beta_2}
}
\end{equation*}

By lemma~\ref{lem2} (c) and corollary~\ref{cor1} we have $A/\ker(w_1)\cong \mathbb K\cong A/\ker(w_2)$, 
$\beta_2\circ\beta_1^{-1}$ is a unital $*$-automorphism on $\mathbb{K}$. Since by theorem~\ref{tm7} a unital $*$-automorphism on $\mathbb{K}$ is either the identity map or $\gamma$, we have to consider two cases.
\begin{enumerate}
	\item $\beta_2\circ\beta_1^{-1}$  is an identity on $\mathbb{K}$\\
	Then $\beta_1=\beta_2$. Since $\ker(w_1)=\ker(w_2)$, we have
	$$
	w_1(a)=\beta_1(a+\ker{w_1})=\beta_2(a+\ker{w_2})=w_2(a),
	$$
	for all $a\in A$. Thus $w_1=w_2$.
	\item $\beta_2\circ\beta_1^{-1}=\gamma$\\
	Then $\beta_2=\gamma\circ\beta_1$. Again by the fact that $\ker(w_1)=\ker(w_2)$, we have
	$$
	w_2(a)=\beta_2(a+\ker{w_2})=\gamma\circ\beta_1(a+\ker{w_1})=\gamma\circ w_1(a),
	$$
	for all $a\in A$.
	Hence $w_2=\gamma\circ w_1$.
\end{enumerate}
From both of two cases, we can conclude that $[w_1]=[w_2]$ which implies the injection of $\phi$, as we wanted.
To prove the surjectivity of $\phi$, let $w^o: A_+\rightarrow \mathbb{K}_+$. 
Consider $I_+:=\ker(w^o)$ and define the ideal $I$ in $A$ as in lemma \ref{lem1}. 
 
Since $A/I$ is a rank-one Kre\u\i n C*-algebra, $A/I$ is isomorphic to $\mathbb K$ by an isomorphism $f$ such that $f\circ [\alpha]=\gamma\circ f$, where $[\alpha]$ is the fundamental symmetry of $A/I$ such that $\pi\circ\alpha=[\alpha]\circ \pi$. 
Note that $f=f_+\oplus f_-$ where $f_+:(A/I)_+\to\mathbb K_+$ is the isomorphism $f_+(a+I)=w^o(a)$, for all $a\in A_+$. 
\begin{equation*}
\xymatrix{
A \ar[r]^-\pi & {A/I} \ar[r]^-f & 
\mathbb K. 
}
\end{equation*}

Define $w:=f\circ \pi$. Then $w\in \Omega(A,\alpha)$ because $w\circ\alpha=\gamma\circ w$. We claim that $w_+=w^o$. To see this, let $a\in A_+$. Then $w(a)=f\circ\pi(a)=f(a+I)=w^o(a)$, and the theorem is proved.
\end{proof}

\begin{definition}
If $(A,\alpha,\epsilon)$ is a commutative full symmetric unital Kre\u\i n C*-algebra with a given fundamental symmetry $\alpha$ and a given odd symmetry $\epsilon$, we define the \emph{even spectrum} as the set of \emph{even characters} of $A$: 
$\Omega(A,\alpha,\epsilon):=\{w\in \Omega(A,\alpha) \mid \epsilon_{\mathbb K}\circ w\circ \epsilon=w\}$. 
\end{definition}

\begin{theorem}\label{prep}
If $(A,\alpha, \epsilon)$ is a commutative unital full symmetric Kre\u\i n C*-algebra with fundamental symmetry $\alpha$ and odd symmetry $\epsilon$, the character $w\in \Omega(A,\alpha)$ is even if and only if $\gamma\circ w\in \Omega(A,\alpha)$ is not even. Hence in every equivalence class 
$[w]=\{w,\gamma\circ \omega\}$ there is one and only one even character and there is a bijection between $\Omega(A,\alpha,\epsilon)$ and 
$\Omega_b(A,\alpha)$. 
\end{theorem}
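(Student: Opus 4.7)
The plan is to establish a dichotomy: for every $w\in \Omega(A,\alpha)$ the composition $\widetilde w:=\epsilon_{\mathbb K}\circ w\circ \epsilon$ equals either $+w$ or $-w$. The first alternative is exactly the definition of $w$ being even; the second, as I will show, is exactly the condition that $\gamma\circ w$ is even. Since the two alternatives are incompatible (characters are nonzero), this gives the iff, and hence each class $[w]=\{w,\gamma\circ w\}$ contains exactly one even character, giving the bijection $\Omega_b(A,\alpha)\to\Omega(A,\alpha,\epsilon)$.

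The first step is a direct matrix computation on generators $T_{a,b}$ showing
\begin{equation*}
\epsilon_{\mathbb K}\circ\gamma=-\gamma\circ\epsilon_{\mathbb K},
\end{equation*}
(both sides send $T_{a,b}$ to $T_{-b,a}$ or $T_{b,-a}$, which differ by a sign). From this,
\begin{equation*}
\epsilon_{\mathbb K}\circ(\gamma\circ w)\circ\epsilon=-\gamma\circ(\epsilon_{\mathbb K}\circ w\circ\epsilon)=-\gamma\circ\widetilde w,
\end{equation*}
so $\gamma\circ w$ is even if and only if $\widetilde w=-w$ (since $\gamma\circ(-w)=\gamma\circ w$ iff $\gamma\circ\widetilde w=\gamma\circ w$ iff $\widetilde w=w$ for that other direction — so invert: $\gamma\circ w$ even $\iff -\gamma\circ\widetilde w=\gamma\circ w\iff \widetilde w=-w$).

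The main step is to show $\widetilde w=\pm w$ for every $w\in \Omega(A,\alpha)$. I would exploit the structure of the odd symmetry by setting $u:=\epsilon(1)$. The Leibniz-type identity $\epsilon(xy)=\epsilon(x)y=x\epsilon(y)$ applied with $x=1$ yields $\epsilon(y)=uy$ for all $y\in A$. From $\epsilon\circ\epsilon=\iota_A$, $\epsilon(x^*)=-\epsilon(x)^*$ and $\epsilon\circ\alpha=-\alpha\circ\epsilon$ evaluated at $1$, I obtain $u^2=1_A$, $u^*=-u$ and $\alpha(u)=-u$, so $u\in A_-$. Since $w$ is compatible with $\alpha$, the image $w(u)\in\mathbb K_-$ satisfies $w(u)^2=1_{\mathbb K}$ and $w(u)^*=-w(u)$; writing $w(u)=T_{0,b}$ these equations force $b^2=1$ and $\bar b=b$, hence $b=\pm1$, i.e.~$w(u)=\pm T_{0,1}=\pm\epsilon_{\mathbb K}(1_{\mathbb K})$. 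For any $x\in A$, a one-line matrix multiplication shows $\epsilon_{\mathbb K}\bigl(T_{0,b}\cdot w(x)\bigr)=b\cdot w(x)$, treating the cases $x\in A_+$ (so $w(x)=T_{\lambda,0}$) and $x\in A_-$ (so $w(x)=T_{0,\mu}$) separately. Hence
\begin{equation*}
\widetilde w(x)=\epsilon_{\mathbb K}\bigl(w(u)w(x)\bigr)=b\,w(x),\qquad b=\pm1,
\end{equation*}
for every $x\in A$, which is precisely the desired dichotomy.

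Combining the three steps: if $b=+1$ then $w$ is even and, by the first step, $\gamma\circ w$ is not; if $b=-1$ then $w$ is not even but $\gamma\circ w$ is. In either case exactly one representative of $[w]$ lies in $\Omega(A,\alpha,\epsilon)$, so the map $[w]\mapsto$ its unique even representative is a well-defined bijection $\Omega_b(A,\alpha)\to\Omega(A,\alpha,\epsilon)$. The main obstacle is the central computation of the third step; it is not deep, but it rests on the observation that an odd symmetry on a commutative unital algebra is multiplication by the element $u=\epsilon(1)$, whose properties completely pin down $w(u)$ up to a sign.
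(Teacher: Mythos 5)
Your proof is correct, and for the crucial ``existence'' half it takes a genuinely different route from the paper. The paper uses the anticommutation $\epsilon_{\mathbb K}\circ\gamma=-\gamma\circ\epsilon_{\mathbb K}$ only to rule out both representatives of a class being even, and then establishes that each class contains an even character by an explicit construction: starting from $w^o\in\Omega(A_+)$ it defines $w(x):=w^o(x_+)+\epsilon_{\mathbb K}\circ w^o\circ\epsilon(x_-)$ and checks (leaning on the identification $\Omega_b(A,\alpha)\cong\Omega(A_+)$ from the preceding theorem) that this is an even character in the given class. You instead prove the stronger dichotomy $\epsilon_{\mathbb K}\circ w\circ\epsilon=\pm w$ for every $w\in\Omega(A,\alpha)$ by observing that on a unital algebra the odd symmetry is left multiplication by $u:=\epsilon(1)$, that the axioms force $u^2=1_A$, $u^*=-u$, $\alpha(u)=-u$, and hence that compatibility pins down $w(u)=\pm T_{0,1}$; the sign of $w(u)$ then decides evenness. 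Your argument is more self-contained (it does not invoke the bijection $\Omega_b(A,\alpha)\cong\Omega(A_+)$) and it makes fully explicit the step ``$w$ not even $\Rightarrow\gamma\circ w$ even,'' which in the paper is only implicit in the interplay between the anticommutation remark and the construction; the paper's construction, on the other hand, is the one that gets reused immediately afterwards to define the Gel'fand transform, so it earns its keep there. All the computations you carry out ($T_{0,b}^2=T_{b^2,0}$, $\epsilon_{\mathbb K}(T_{0,b}T_{\lambda,\mu})=bT_{\lambda,\mu}$, and the verification that $\gamma\circ w$ is even iff $\widetilde w=-w$ using injectivity of $\gamma$) check out.
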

\begin{proof}
Since $\epsilon_{\mathbb K}\circ \gamma=-\gamma\circ \epsilon_{\mathbb K}$, we see that $w$ is even if and only if $\gamma\circ w$ is odd. 
Let $w^o$ be a character on $A_+$ and define $w(x):=w^o(x_+)+\epsilon_{\mathbb K}\circ w^o\circ \epsilon(x_-)$. 
By the properties of the odd symmetry, we see that $w$ is an even character compatible with $\alpha$. 
\end{proof}

\begin{definition}
Let $A$ be a commutative unital full symmetric Kre\u\i n C*-algebra and let $\alpha$ be a fundamental symmetry of $A$ and 
$\epsilon$ an odd symmetry of $A$. 
The \emph{Gel'fand transform} of $x\in A$ is the map $\hat{x}:\Omega_b(A,\alpha)\to\mathbb K$ defined by:
\begin{equation*}
\hat{x}([w]):=w(x_+)+\epsilon_{\mathbb K}\circ w\circ \epsilon (x_-), \quad \forall x\in A. 
\end{equation*}
\end{definition} 
By the previous theorem, it is clear that the Gel'fand transform $\hat{x}$ of $x$ is just the function that to every even character 
$w\in \Omega(A,\alpha,\epsilon)$ associates $w(x)$. 

Although the following theorem is our goal, the proof is easy and straightforward.
\begin{theorem}[Spectral theorem]\label{thm2}
If $(A,\alpha,\epsilon)$ is a unital commutative symmetric imprimitive Kre\u\i n C*-algebra with fundamental symmetry $\alpha$ and odd symmetry 
$\epsilon$, then the Gelfand transform
$$\varphi:A\rightarrow C(\Omega_b(A,\alpha),\mathbb{K}),~a\mapsto \hat{a}$$
is an isometric $*$-isomorphism.
\end{theorem}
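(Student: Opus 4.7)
The plan is to verify four properties of $\varphi$ — well-definedness and continuity, the $*$-homomorphism property (intertwining the fundamental symmetries), injectivity, and surjectivity — and then to deduce isometry from the standard C*-algebra fact that injective $*$-homomorphisms are isometric, applied to the associated C*-algebras $(A,\dag_\alpha)$ and $(C(\Omega_b(A,\alpha),\mathbb{K}),\dag_{\phi_C})$.

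First I would check well-definedness. The relation $\epsilon\circ\alpha=-\alpha\circ\epsilon$ forces $\epsilon(A_\mp)\subseteq A_\pm$, so $\epsilon(x_-)\in A_+$, $w\circ\epsilon(x_-)\in\mathbb{K}_+$, and $\epsilon_{\mathbb{K}}\circ w\circ\epsilon(x_-)\in\mathbb{K}_-$; hence $\hat{x}([w])\in\mathbb{K}_+\oplus\mathbb{K}_-=\mathbb{K}$. Continuity follows by composing $\hat{x}$ with the quotient map $\mu:\Omega(A,\alpha)\to\Omega_b(A,\alpha)$: the resulting function is assembled from evaluations $w\mapsto w(y)$, each continuous by the definition of the topology on $\Omega(A)$. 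Using Theorem~\ref{prep}, $\hat{x}([w])$ coincides with $w^\bullet(x)$, where $w^\bullet$ is the unique even character in $[w]$; since every even character is itself a unital $*$-homomorphism $A\to\mathbb{K}$ satisfying $w^\bullet\circ\alpha=\gamma\circ w^\bullet$, linearity, multiplicativity, $*$-preservation, and the intertwining $\varphi\circ\alpha=\phi_C\circ\varphi$ are immediate.

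For injectivity, if $\hat{x}=0$ then for each $[w]$ the $\mathbb{K}_+$- and $\mathbb{K}_-$-components of $\hat{x}([w])$ vanish separately, giving $w(x_+)=0$ and $w(\epsilon(x_-))=0$ for every even character $w$. Through the homeomorphism $\Omega_b(A,\alpha)\cong\Omega(A_+)$ established in the previous theorem, the classical Gel'fand theorem applied to the commutative unital C*-algebra $A_+$ forces $x_+=0$ and $\epsilon(x_-)=0$; since $\epsilon$ is an involution restricting to a bijection $A_-\to A_+$, also $x_-=0$. Isometry then follows from the standard C*-algebra result applied to $\varphi:(A,\dag_\alpha)\to(C(\Omega_b(A,\alpha),\mathbb{K}),\dag_{\phi_C})$.

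Surjectivity I would reduce to the scalar case by splitting over the even/odd decomposition. Since $\varphi$ intertwines the fundamental symmetries, $\varphi(A_\pm)\subseteq C(\Omega_b(A,\alpha),\mathbb{K}_\pm)$, and it suffices to show surjectivity onto each summand. On the even part, $\varphi|_{A_+}$ coincides, modulo the identifications $\Omega_b(A,\alpha)\cong\Omega(A_+)$ and $\mathbb{K}_+\cong\mathbb{C}$, with the classical Gel'fand transform of $A_+$, hence is surjective onto $C(\Omega_b(A,\alpha),\mathbb{K}_+)$. On the odd part, any target $f\in C(\Omega_b(A,\alpha),\mathbb{K}_-)$ has the form $\epsilon_{\mathbb{K}}\circ g$ for some $g\in C(\Omega_b(A,\alpha),\mathbb{C})$; realizing $g$ as the Gel'fand transform of some $b\in A_+$ yields $\epsilon(b)\in A_-$ with $\varphi(\epsilon(b))=f$. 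The main obstacle I anticipate is purely bookkeeping: one must verify cleanly that under the identifications $\Omega_b(A,\alpha)\cong\Omega(A_+)$ and $\mathbb{K}_\pm\cong\mathbb{C}$ the maps $\varphi|_{A_+}$ and $\varphi|_{A_-}\circ\epsilon^{-1}$ both reduce to the classical Gel'fand transform of $A_+$, after which everything follows from results already established in the paper.
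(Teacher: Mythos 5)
Your proposal is correct and follows essentially the same route as the paper: both reduce everything to the classical Gel'fand theorem for the commutative C*-algebra $A_+$ via the identification $\Omega_b(A,\alpha)\cong\Omega(A_+)$ from the preceding theorem, and then transfer the result to the odd part using the intertwining $\varphi\circ\epsilon=\epsilon_C\circ\varphi$. The only (harmless) divergence is in the isometry step, where you invoke the standard fact that injective $*$-homomorphisms of the associated C*-algebras $(A,\dag_\alpha)$ and $(C(\Omega_b(A,\alpha),\mathbb K),\dag_{\phi_C})$ are isometric, whereas the paper obtains isometry of $\varphi_-$ directly by writing it as a composition of the isometries $\epsilon$, $\varphi_+$ and $\epsilon_C$.
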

\begin{proof}
In view of theorem~\ref{prep} let us denote by $[w]$ with $w$ even a point of $\Omega_b(A,\alpha)$.  
To prove that $\varphi$ is a $*$-homomorphism of algebras, let $a,b\in A$ and $k\in\mathbb{C}$,
\begin{align*}
&(\varphi(ab))([w])=(\widehat{ab})([w])=w(ab)=w(a)w(b)=\hat{a}([w])\hat{b}([w])=(\varphi(a)\varphi(b))([w]), \\
&(\varphi(ka+b))([w])= (\widehat{ka+b})([w])=w(ka+b)=kw(a)+w(b)=k\hat{a}([w])+\hat{b}([w]) \\
                   & \qquad  =(k\varphi(a)+\varphi(b))([w]),               \\
&\varphi(a^*)[w]=\widehat{a^*}([w])=[w](a^*)=w(a)^*=\varphi(a)^*[w].                    
\end{align*}
for all $[w]\in \Omega_b(A,\alpha)$. Clearly $\varphi(1_A)=1_C$ so that $\varphi$ is unital.

It is easy to verify that $\varphi\circ\alpha=\phi_C\circ\varphi$. In fact, for all $a\in A, [w]\in \Omega_b(A,\alpha)$,
\begin{align*}
(\varphi\circ\alpha)(a)[w]&=\varphi(\alpha(a))[w]=w(\alpha(a))=\gamma(w(a))=\gamma(\varphi(a)[w])\\
                          &=(\gamma\circ\varphi(a))[w]=\phi_C(\varphi(a))[w]=(\phi_C\circ\varphi)(a)[w].
\end{align*}

We also have that $\varphi\circ\epsilon=\epsilon_C\circ\varphi$, in fact, for all $[w]\in \Omega_b(A,\alpha)$, and all $x\in A$,  
\begin{equation}\label{eqepsvar}
\epsilon_C\circ\varphi(x)[w]=\epsilon_{\mathbb K}(w(x))=\epsilon_{\mathbb K}\circ w\circ \epsilon^2(x)=w\circ\epsilon(x)=\varphi\circ \epsilon(x)[w].  
\end{equation}

Let $A$ be a unital commutative Kre\u\i n C*-algebra with the fundamental symmetry $\alpha$. Then $(A,\dag_A)$ and 
$(C(\Omega_b(A,\alpha),\mathbb{K}),\dag_C)$ become C*-algebras with the involutions $\dag_A$ and $\dag_C$ defined as in theorem~\ref{tm6} respectively, that is,
$$
a^{\dag_A}=\alpha(a^{*_A}) \text{ for all } a\in A \text{ and } f^{\dag_C}=\phi_C(f^{*_C}) \text{ for all } f\in C(\Omega_b(A,\alpha),\mathbb{K}).
$$  
To show $\varphi$ is a $\dag$-homomorphism, let $a\in A$ and $[w]\in \Omega_b(A,\alpha)$,
$$
\varphi(a^{\dag_A})[w]=\varphi(\alpha(a^{*_A}))[w]=\widehat{\alpha(a^{*_A})}[w]=w(\alpha(a^{*_A})).
$$
Since $w\circ\alpha=\gamma\circ w$, we have,
$$
\varphi(a^{\dag_A})[w]=\gamma(w(a^{*_A}))=\gamma(w(a)^{*_\mathbb{K}})
=\gamma(\hat{a}([w])^{*_\mathbb{K}})=\gamma(\hat{a}^{*_C}([w]))
$$
Since $\phi_C(f)=\gamma\circ f$ for all $f\in C(\Omega_b(A),\mathbb{K})$,
$$
\varphi(a^{\dag_A})[w]=\phi_C(\hat{a}^{*_C})[w]=\hat{a}^{\dag_C}[w]=\varphi(a)^{\dag_C}[w].         
$$

By the spectral theorem for unital commutative C*-algebras, the restriction of the Gel'fand transform to the even part 
$\varphi_+:A_+\to C(\Omega(A,\alpha),\mathbb K)_+$ is an isometric 
$\dag$-isomorphism, for all $a\in A$ that coincides with the usual Gel'fand isomorphism for the commutative unital C*-algebra $A_+$. 

Since $\epsilon$ and $\epsilon_C$ are linear surjective (because $\epsilon^2=i_A$) isometries, from the equation \eqref{eqepsvar}, we see that 
$\varphi_-=\epsilon_-\circ\varphi_+\circ{\epsilon_C}_+$ is isometric surjective too and hence $\varphi=\varphi_+\oplus\varphi_-$ is a surjective isometry that is also a $*$-homomorphism.
\end{proof}

The previous theorem provide a complete characterization of those unital commutative Kre\u\i n \hbox{C*-algebras} that are full and symmetric. In this case, a posterori, with a bit more work, we might actually prove that the fundamental symmetry and the odd symmetry are indeed unique. 
Further analysis is required in order to provide a spectral theory of more general Kre\u\i n C*-algebras. Omitting the exchange symmetry requirement will lead us to algebras of sections of bundles of one-dimensional Kre\u\i n C*-algebras and omitting the commutativity (or just the imprimitivity condition on the odd part) will lead us to a theory of Kre\u\i n spaceoids along very similar lines to those used to describe the spectrum of commutative full C*-categories in~\cite{Be}, we hope that the extra effort paid here to describe a ``direct proof'' of the spectral theory in the special case of commutative full symmetric Kre\u\i n C*-algebras will facilitate the analysis of those more general topics that we plan to address in the near future.

\vskip.5cm \noindent{\bf Acknowledgements :} We thank Dr. Roberto Conti for making several suggestions and providing references to arguments in the theory of C*-dynamical systems ($\mathbb Z_2$-graded C*-algebras) that might be used to shortcut/simplify most of the proofs and results provided in this work. We plan to return on this topic and to improve the manuscript in this direction. 

{\small

}

\end{document}